\documentclass[a4paper,12pt,reqno]{amsart}

\usepackage{amsmath}
\usepackage{amssymb}
\usepackage{amsfonts}
\usepackage{graphicx}
\usepackage[colorlinks]{hyperref}
\renewcommand\eqref[1]{(\ref{#1})} %Need with hyperref

\setlength{\textwidth}{15.2cm}
\setlength{\textheight}{22.7cm}
\setlength{\topmargin}{0mm}
\setlength{\oddsidemargin}{3mm}
\setlength{\evensidemargin}{3mm}
\setlength{\footskip}{1cm}

\newcommand{\R}{\mathbb R}

\newcommand{\ba}{\begin{eqnarray*}}
	\newcommand{\ea}{\end{eqnarray*}}
\newcommand{\om}{\omega}
\newcommand{\be}{\begin{equation}}
	\newcommand{\ee}{\end{equation}}

\title[Deformed Hankel transform by modulus of continuity]{Asymptotic estimates for the growth of Deformed Hankel transform by modulus of continuity}

\author[V. Kumar]{Vishvesh Kumar}
\address{
	Vishvesh Kumar:
	\endgraf
	Department of Mathematics: Analysis, Logic and Discrete Mathematics
	\endgraf
	Ghent University, Krijgslaan 281, Building S8, B 9000 Ghent
	\endgraf
	Belgium
	\endgraf
	{\it E-mail address} {\rm Vishvesh.Kumar@ugent.be; vishveshmishra@gmail.com}}

\author[J. E. Restrepo]{Joel E. Restrepo}
\address{
	Joel E. Restrepo:
	\endgraf
	Department of Mathematics: Analysis, Logic and Discrete Mathematics 
	\endgraf
	Ghent University, Krijgslaan 281, Building S8, B 9000 Ghent
	\endgraf
	Belgium
	\endgraf
	{\it E-mail address} {\rm Joel.Restrepo@ugent.be; cocojoel89@yahoo.es}}

\author[M. Ruzhansky]{Michael Ruzhansky}
\address{Michael Ruzhansky:
	\endgraf
	Department of Mathematics: Analysis, Logic and Discrete Mathematics 
	\endgraf
	Ghent University, Krijgslaan 281, Building S8, B 9000 Ghent, Belgium
	\endgraf
	and
	\endgraf
	School of Mathematical Sciences
	\endgraf Queen Mary University of London 
	\endgraf
	United Kingdom
	\endgraf
	{\it E-mail address} {\rm Michael.Ruzhansky@ugent.be}}

%First Author

%AMS Subject Classification
\subjclass[2010]{26A16, 42A38, 46E15.}
%Key Words and Phrases
\keywords{Lipschitz type condition, Modulus of continuity, Dunkl transform, Generalized translation operator, Asymptotic estimate.}

%\thanks{}

%%% Theorem Like Envirouments

%%%% Local Definitions start here

%%%% End of Local Definitions
%%%%%%%%%%%%%%%%%%%%%%%%%%%%%%%%%%%%%%%%%%
%%%%%%%%%%%%%%%%%%%%%%%%%%%%%%%%%%%%%%%%%%

\newtheorem{lem}{Lemma}[section]
\newtheorem{thm}{Theorem}[section]

\newtheorem{cor}{Corollary}[section]

\newtheorem{definition}{Definition}[section]

\newtheorem{rem}{Remark}[section]

\newtheorem{ex}{Example}[section]

\begin{document}
	\begin{abstract}
		We derive asymptotic estimates for the growth of the norm of the deformed Hankel transform on the deformed Hankel--Lipschitz space defined via a generalised modulus of continuity. The established results are similar in nature to the well-known Titchmarsh theorem, which provide a characterization of  the square integrable functions satisfying certain  Cauchy--Lipschitz condition in terms of an asymptotic estimate for the growth of the norm of their Fourier transform. We also give some necessary conditions in terms of the generalised modulus of continuity for the boundedness of the Dunkl transform of functions in  Dunkl-Lipschitz spaces, improving the Hausdorff-Young inequality for the Dunkl transform in this special scenario.
	\end{abstract}
	\maketitle
	\tableofcontents
	
	\section{Introduction}
	The well-known Titchmarsh theorem \cite[Theorem 85]{ti} provided a characterisation of H\"older Lipschitz classes in terms of the asymptotic estimate of the growth of the norm of the Fourier transform. This was extended by Younis in \cite{younis} using a different growth of functions to define the Lipschitz class. In fact, he used a particular modulus of continuity of the type $t^{\alpha}(\log t)^{\gamma}$ $(0<\alpha\leqslant 1, \gamma\in\mathbb{R})$ instead of $t^\alpha$ as $t\to+0$. This type of classes are frequently called Dini-Lipschitz classes \cite[p. 65]{nuevoman}. The classical results used the Fourier transform as a basic model to give an asymptotic estimate of the growth of the norm of functions belonging to the  Lipschitz class. Recently, it was proved that the aforementioned classical results can be extended to a more general family of growths called generalised modulus (moduli) of continuity \cite{DFR}, which, in particular, recover the Titchmarsh and Younis results. As a one of the simplest example of this class we can take the function  $t^{\gamma}\left(\ln\ln\frac1{t}\right)^{\lambda}$, where $\gamma\in(0,1)$ and $\lambda\in\mathbb{R}$, which gives rise to new results \cite{DFR}. For less trivial examples we can take $t^{\gamma+\frac{C}{\ln^{\lambda}\frac{1}{t}}}$. While, for a general one, we can consider $t^{f(t)}$ where $f$ satisfies the log-condition $|f(t+h)-f(t)|\leqslant C\frac{1}{|\ln |h||}.$ More examples and details on this class can be found e.g. in \cite[Chapter 2]{rafeirobook} or \cite{nata1999}.     
	
	In the last few decades, several authors have exploited the idea of writing Lipschitz (Dini-Lipschitz) classes by several generalized translation operators, which can balance the growth of different transforms with similar behaviour to the Fourier one. These operators have contributed to defining different generalised Lipschitz classes, which can be equivalently described by an asymptotic estimate of some Fourier like transform. Those works can be found in different scenarios such as  Euclidean spaces \cite{platonov-d,2008,Bray2,intro1,intro4,intro5} or manifolds \cite{daher2019,RSF, Kumar2,platonov}. 
	
	Dunkl analysis generalises the classical Fourier analysis
	on $\mathbb{R}$. It was first introduced by C. F. Dunkl in his seminal paper \cite{Dunkl} around thirty years ago. Dunkl analysis was
	further developed by several mathematicians. See for instance \cite{AAS,Dunkanker, plan, [4], dunkl-p,GIY,kumar,rosler} and the references cited therein.
	
	In this paper, we examine Dunkl-Lipschitz classes defined by the Dunkl-translation operator and whose behavior is controlled by a generalised modulus of continuity. We show that functions in the latter classes can be equivalently described by an asymptotic estimate for the growth of the norm of their Dunkl transform (two sided estimates). We  provide a characterisation of the $L^2$-Dunkl-Lipschitz space in terms of the asymptotic behaviour of the Dunkl transform given by the generalised modulus of continuity.  We complement our study by giving some necessary conditions to obtain the $L^{\nu,\alpha}$-boundedness of the Dunkl transform of a function in $L^p$-Dunkl-Lipschitz spaces. In particular, we obtain an improvement and generalisation of the Hausdorff-Young inequality for the Dunkl transform in the setting of Dunkl-Lipschitz spaces.    
 
The structure of the paper is as follows. In Section \ref{preli} we recall some basic definitions, concepts, and auxiliary results to be used throughout the paper. In Section \ref{mainresults} we present the main results: an analogue of the Titchmarsh theorem by modulus of continuity for Dunkl-Lipschitz classes involving a Dunkl-translation, and equivalently described by the growth of the norm of the Dunkl transform for functions in $L^{p}(\mathbb{R},|x|^{2\alpha-1}dx)$ $(\alpha>1/4,1<p\leqslant 2)$. In the end of the section we also provide some interesting remarks regarding the considered classes.     
	
	\section{Preliminaries}\label{preli}
	
	In this section, we collect  several definitions and facts about modulus of continuity and Dunkl analysis.
	
	\subsection{Modulus of continuity and Zygmund type classes}  
	\begin{definition}
		A non-negative function $\omega:I\rightarrow[0,+\infty)$ defined on a real interval $I$ is called almost increasing (almost decreasing) if there exists a constant $C\geqslant 1$ such that $\omega(t)\leqslant C\omega(s)$ for all $t,s\in I$ with $t\leqslant s$ $(t\geqslant s)$.
	\end{definition}
	
	\begin{definition}
		For a fixed $\delta_0\in(0,+\infty)$, the function $\omega:[0,\delta_0]\to[0,+\infty)$ is called a \text{\it modulus of continuity} if
		\begin{enumerate}
			\item\label{m1} $\omega(0)=0$ and $\omega(t)$ is continuous on $[0,\delta_0]$.
			\item\label{m2} $\omega(t)$ is almost increasing on $t\in[0,\delta_0]$.
			\item\label{m3} $\frac{\omega(t)}{t}$ is almost decreasing on $t\in[0,\delta_0]$.
		\end{enumerate}
	\end{definition}
	We endow the modulus of continuity considered here  with the following Zygmund type conditions:
	
	\begin{itemize}
		\item There exists a constant $C$ such that for all $t\in[0,\delta_0]$,
		\begin{equation}\label{cond4}
			\int_0^t\frac{\omega(x)}{x}\,\mathrm{d}x\leqslant C\omega(t).
		\end{equation}
		
		\item There exists a constant $C$ such that for all $t\in[0,\delta_0]$,
		\begin{equation}\label{cond3}
			\int_t^{\delta_0}\frac{\omega(x)}{x^2}\,\mathrm{d}x\leqslant C\frac{\omega(t)}{t}.
		\end{equation}
	\end{itemize}

	A modulus of continuity $\omega$ satisfying the Zygmund type conditions \eqref{cond4} and \eqref{cond3} belongs to the Bari--Stechkin classes \cite{nata111,nata10,nata1}. Classical examples of this class are $t^{\gamma}$, $t^{\gamma}\left(\ln\frac1{t}\right)^{\lambda}$, and $t^{\gamma}\left(\ln\ln\frac1{t}\right)^{\lambda}$, where $\gamma\in(0,1)$ and $\lambda\in\mathbb{R}$. 
	
	Since $\frac{\omega(t)}{t}$ is almost decreasing on $[0,\delta_0]$, we obtain the semi-additivity property: $\omega(t+s)\leqslant C_1\big(\omega(t)+\omega(s)\big),$ for any  $t,s\in[0,\delta_0],$ and consequently, the doubling property: $\omega(2t)\leqslant C_2 \omega(t),$ for any $t\in[0,\delta_0].$

	We note here that the modulus of continuity experienced critical behavior  near the origin. Therefore, it is a customary practice  to impose some restrictions on the weight for large values without affecting the generality of the results while dealing with estimates at infinity. Thus, sometimes in this paper, we will assume that $\omega(t)$ is bounded below on $[\delta_0,+\infty)$ by a positive number, and that $\omega^2(t)/t^{3}\in L^1\big([\delta_0,+\infty)\big)$. We will write these conditions explicitly in the statements when we need these.    
	
	\medskip Now, we give some definitions of some classes of functions which will help us to introduce the Zygmund type classes later. These definitions can be found in \cite{rafeirobook}.
	
	\begin{definition}
		Let us fix $\delta_0\in(0,+\infty)$. We denote by
		\begin{enumerate}
			\item $W,$ the class of continuous and positive functions $\omega$ on $(0,\delta_0]$ such that there exists finite or infinite $\displaystyle{\lim_{t\to0}\omega(t)}$. 
			\item $W_0,$  the subclass of almost increasing functions $\omega\in W$.
			\item $\overline{W},$  the class of functions $\omega\in W$ such that $t^{\alpha}\omega(t)\in W_0$ for some $\alpha\in\mathbb{R}$.
			\item $\underline{W},$  the class of functions $\omega\in W$ such that $\frac{\omega(t)}{t^{\beta}}$ is almost decreasing for some $\beta\in\mathbb{R}$. 
		\end{enumerate}
		A function $\omega$ belongs to the Zygmund class $\mathfrak{Z}^{\gamma}$ $(\gamma\in\mathbb{R})$, if $\omega\in\overline{W}$ and 
		\[\int_0^t \frac{\omega(s)ds}{s^{1+\gamma}}\leqslant C\frac{\omega(t)}{t^{\gamma}},\quad t\in(0,\delta_0),\]
		and to the Zygmund class $\mathfrak{Z}_{\lambda}$ $(\lambda\in\mathbb{R})$, if $\omega\in\underline{W}$ and
		\[\int_t^{\delta_0}\frac{\omega(s)ds}{s^{1+\lambda}}\leqslant C\frac{\omega(t)}{t^{\lambda}},\quad t\in(0,\delta_0).\]
	\end{definition}
	Notice that for a modulus of continuity $\omega$ satisfying the condition \eqref{cond3} it follows that $\omega$ belongs to the class $\mathfrak{Z}_1$, while, for a function $\omega$ satisfying the condition \eqref{cond4}, we deduce that $\omega$ is a member of the class $\mathfrak{Z}^0$.

	Next, we recall a useful result from \cite[Theorem 2.10]{rafeirobook}, which will play an essential role in the proof of the main result (see also \cite[Theorems 7.9, 7.10]{nata1999}). We first recall the Matuszewska--Orlicz type lower and upper indices of the modulus of continuity $\omega(t)$ involved in the next theorem.  They are given by  \cite{rafeirobook,nata1999}:
	\begin{align*}
		m(\omega)&=\sup_{0<t<1}\frac{\ln\left(\displaystyle\limsup_{\varepsilon\to0}\frac{\omega(\varepsilon t)}{\omega(\varepsilon)}\right)}{\ln t}=\lim_{t\to0}\frac{\ln\left(\displaystyle\limsup_{\varepsilon\to0}\frac{\omega(\varepsilon t)}{\omega(\varepsilon)}\right)}{\ln t},\end{align*} 
	and
	\begin{align*}
		M(\omega)&=\sup_{t>1}\frac{\ln\left(\displaystyle\limsup_{\varepsilon\to0}\frac{\omega(\varepsilon t)}{\omega(\varepsilon)}\right)}{\ln t}=\lim_{t\to+\infty}\frac{\ln\left(\displaystyle\limsup_{\varepsilon\to0}\frac{\omega(\varepsilon t)}{\omega(\varepsilon)}\right)}{\ln t}.
	\end{align*}
	The relation between the above indices and the Zygmund type classes is described in the next result.  
	\begin{thm}\cite[Theorem 2.10]{rafeirobook}\label{rafeiro}
		Let $\omega$ be a modulus of continuity and $\gamma\in\mathbb{R}$. Then $\omega\in\mathfrak{Z}^{\gamma}$ if and only if $m(\omega)>\gamma$, while $\omega\in\mathfrak{Z}_{\gamma}$ if and only if $M(\omega)<\gamma$. Moreover, we have 
		\begin{align} \label{eqm}
			m(\omega)=\sup\left\{\gamma>0:\frac{\omega(t)}{t^{\gamma}}\,\,\text{is almost increasing}\right\},
		\end{align}
		\begin{align}
			M(\omega)=\inf\left\{\gamma>0:\frac{\omega(t)}{t^{\gamma}}\,\,\text{is almost decreasing}\right\}.
		\end{align}
	\end{thm}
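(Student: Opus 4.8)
The plan is to separate the statement into two blocks: the index formulas \eqref{eqm} (together with the monotone representations of $m(\omega)$ and $M(\omega)$ built into their definitions), and the equivalences with membership in the Zygmund classes $\mathfrak{Z}^{\gamma}$, $\mathfrak{Z}_{\gamma}$; the second block will be deduced from the first by elementary integral estimates. The engine of the first block is the Matuszewska--Orlicz submultiplicative majorant
\[
\varphi(t):=\limsup_{\varepsilon\to0^{+}}\frac{\omega(\varepsilon t)}{\omega(\varepsilon)},\qquad t>0 .
\]
First I would record the a priori bounds coming from the axioms of a modulus of continuity: almost monotonicity of $\omega$ yields $\varphi(t)\leqslant C$ for $0<t\leqslant1$, and almost monotonicity of $\omega(t)/t$ yields $\varphi(t)\leqslant Ct$ for $t\geqslant1$; in particular $\varphi$ is finite, measurable, bounded on compact subsets of $(0,+\infty)$, and $\varphi(1)=1$. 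Next, factoring $\frac{\omega(\varepsilon ts)}{\omega(\varepsilon)}=\frac{\omega(\varepsilon ts)}{\omega(\varepsilon t)}\cdot\frac{\omega(\varepsilon t)}{\omega(\varepsilon)}$, using $\limsup(fg)\leqslant(\limsup f)(\limsup g)$ and substituting $\eta=\varepsilon t\to0$ in the first factor, one gets $\varphi(ts)\leqslant\varphi(t)\varphi(s)$, so $u\mapsto\ln\varphi(e^{u})$ is subadditive and vanishes at $0$. By the Fekete lemma for subadditive functions, the limits $\lim_{t\to0}\frac{\ln\varphi(t)}{\ln t}$ and $\lim_{t\to+\infty}\frac{\ln\varphi(t)}{\ln t}$ exist and coincide with the corresponding extremal expressions over $0<t<1$ and over $t>1$; this is precisely the content of the displayed definitions of $m(\omega)$ and $M(\omega)$.

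To pass to \eqref{eqm}, I would use the exact reformulation of almost monotonicity in terms of the full supremum majorant $\Phi_{\omega}(r):=\sup\{\omega(rt)/\omega(t): t>0,\ t,rt\in(0,\delta_{0}]\}$, which is again submultiplicative: putting $r=s/t$ in the inequality defining ``$t^{-\gamma}\omega(t)$ almost increasing on $[0,\delta_{0}]$'' shows that this property is literally equivalent to $\Phi_{\omega}(r)\leqslant Cr^{\gamma}$ for all $r\in(0,1]$, and similarly ``$t^{-\gamma}\omega(t)$ almost decreasing'' is equivalent to $\Phi_{\omega}(r)\leqslant Cr^{\gamma}$ for all $r\geqslant1$. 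Applying the Fekete lemma to $\Phi_{\omega}$ exactly as above identifies $\sup\{\gamma:t^{-\gamma}\omega(t)\text{ almost increasing}\}$ with $\lim_{r\to0}\ln\Phi_{\omega}(r)/\ln r$ and $\inf\{\gamma:t^{-\gamma}\omega(t)\text{ almost decreasing}\}$ with $\lim_{r\to+\infty}\ln\Phi_{\omega}(r)/\ln r$. It then remains to check that passing from the full supremum $\Phi_{\omega}$ to the critical-scale $\limsup$ $\varphi$ does not change these two limits; here the continuity and strict positivity of $\omega$ on any $[\delta_{1},\delta_{0}]$ are used to absorb the contribution of scales $\varepsilon$ bounded away from $0$ into a bounded factor, after which \eqref{eqm} follows. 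I expect this reconciliation of the two majorants to be the main obstacle; the rest of the argument is bookkeeping.

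For the equivalences with the Zygmund classes, the direction ``index condition $\Rightarrow$ membership'' is a one-line integral estimate. If $m(\omega)>\gamma$, pick $\gamma'$ with $\gamma<\gamma'<m(\omega)$, so that $t^{-\gamma'}\omega(t)$ is almost increasing by \eqref{eqm}; then
\[
\int_{0}^{t}\frac{\omega(s)}{s^{1+\gamma}}\,ds=\int_{0}^{t}\frac{\omega(s)}{s^{\gamma'}}\,s^{\gamma'-\gamma-1}\,ds\leqslant C\,\frac{\omega(t)}{t^{\gamma'}}\int_{0}^{t}s^{\gamma'-\gamma-1}\,ds=\frac{C}{\gamma'-\gamma}\,\frac{\omega(t)}{t^{\gamma}},
\]
the last integral converging since $\gamma'-\gamma>0$; combined with $\omega\in W_{0}\subset\overline{W}$ this gives $\omega\in\mathfrak{Z}^{\gamma}$. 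Symmetrically, if $M(\omega)<\gamma$ pick $\gamma'$ with $M(\omega)<\gamma'<\gamma$, so $t^{-\gamma'}\omega(t)$ is almost decreasing, and integrate $s^{\gamma'-\gamma-1}$ over $[t,\delta_{0}]$: since $\gamma'-\gamma<0$ the tail is bounded by $t^{\gamma'-\gamma}/(\gamma-\gamma')$, and $\omega\in\underline{W}$ automatically because $\omega(t)/t$ is almost decreasing, whence $\omega\in\mathfrak{Z}_{\gamma}$.

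For the converse ``membership $\Rightarrow$ index condition'', suppose $\omega\in\mathfrak{Z}^{\gamma}$ and set $\psi(t)=\int_{0}^{t}s^{-1-\gamma}\omega(s)\,ds$ (we may assume $0\leqslant\gamma<1$, since for $\gamma\geqslant1$ no modulus of continuity lies in $\mathfrak{Z}^{\gamma}$ and for $\gamma<0$ the inequality $m(\omega)>\gamma$ is trivial). The defining inequality of $\mathfrak{Z}^{\gamma}$ reads $\psi(t)\leqslant C\,t\,\psi'(t)$, i.e. $(\ln\psi(t))'\geqslant 1/(Ct)$, so $t^{-1/C}\psi(t)$ is non-decreasing; on the other hand almost monotonicity of $\omega(t)/t$ gives $\omega(s)\geqslant c\,(s/t)\,\omega(t)$ for $s\leqslant t$, hence $\psi(t)\asymp t^{-\gamma}\omega(t)$. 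Therefore $t^{-\gamma-1/C}\omega(t)\asymp t^{-1/C}\psi(t)$ is almost increasing, so $m(\omega)\geqslant\gamma+1/C>\gamma$ by \eqref{eqm}. The implication $\omega\in\mathfrak{Z}_{\gamma}\Rightarrow M(\omega)<\gamma$ is proved in the same way with $\Psi(t)=\int_{t}^{\delta_{0}}s^{-1-\gamma}\omega(s)\,ds$ (here membership forces $\gamma>0$, the remaining $\gamma$ being vacuous): the $\mathfrak{Z}_{\gamma}$-inequality forces $t^{1/C}\Psi(t)$ to be non-increasing, $\omega$ being almost increasing gives $\Psi(t)\asymp t^{-\gamma}\omega(t)$ for $t$ near $0$, and therefore $t^{-\gamma+1/C}\omega(t)$ is almost decreasing near $0$ and --- after enlarging the constant using continuity and positivity of $\omega$ on $[\delta_{1},\delta_{0}]$ --- on all of $[0,\delta_{0}]$, giving $M(\omega)\leqslant\gamma-1/C<\gamma$ and finishing the proof.
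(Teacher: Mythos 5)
This statement is not proved in the paper at all: it is imported verbatim as \cite[Theorem 2.10]{rafeirobook} (see also \cite[Theorems 7.9, 7.10]{nata1999}), so there is no in-paper argument to compare yours against; I can only assess your attempt against the standard proof in those references, which runs through essentially the same circle of ideas (submultiplicative majorants, self-improvement of the Bari--Stechkin integral conditions).

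Within that framework, your treatment of the two equivalences $\omega\in\mathfrak{Z}^{\gamma}\Leftrightarrow m(\omega)>\gamma$ and $\omega\in\mathfrak{Z}_{\gamma}\Leftrightarrow M(\omega)<\gamma$ is correct \emph{modulo} \eqref{eqm}: the integral estimate with an intermediate exponent $\gamma'$ for one direction, and the differential inequality $\psi(t)\leqslant Ct\psi'(t)$ together with $\psi(t)\asymp t^{-\gamma}\omega(t)$ for the self-improving converse, are exactly the standard devices and your case analysis for $\gamma\leqslant 0$ and $\gamma\geqslant 1$ is defensible. The genuine gap is the one you yourself flag and then defer: the hard half of \eqref{eqm}, namely that $\gamma<m(\omega)$ (with $m(\omega)$ defined through the \emph{limsup} majorant $\varphi(t)=\limsup_{\varepsilon\to0}\omega(\varepsilon t)/\omega(\varepsilon)$) forces $t^{-\gamma}\omega(t)$ to be almost increasing on all of $(0,\delta_0]$. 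Your proposed reconciliation of $\varphi$ with the full supremum majorant $\Phi_\omega$ --- absorbing ``scales $\varepsilon$ bounded away from $0$'' into a bounded factor --- only disposes of pairs $s\leqslant t$ with $t\geqslant\delta_1$; it does not control pairs with $s,t\to0$ at a fixed ratio $r=s/t$, where $\omega(\varepsilon r)/\omega(\varepsilon)$ may approach its limsup non-uniformly in $r$. Upgrading the pointwise-in-$r$ asymptotic bound $\varphi(r)\leqslant r^{\gamma}$ to the uniform two-parameter bound $\Phi_\omega(r)\leqslant Cr^{\gamma}$ is precisely the uniform convergence/boundedness theorem for O-regularly varying (submultiplicative) functions, and it needs an actual argument (Baire category or an explicit covering of scales using submultiplicativity of $\Phi_\omega$ restricted to $(0,\delta_1]$). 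Since your proof of ``$m(\omega)>\gamma\Rightarrow\omega\in\mathfrak{Z}^{\gamma}$'' (and its $M$-counterpart) is routed entirely through \eqref{eqm}, this missing step is load-bearing, not bookkeeping.
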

	Let us now define the following auxiliary weight function.
	\begin{definition} \label{W_w}
		Let $\om$ be a modulus of continuity such that $\om(t)/t\in L^1([0,\delta_0])$. Define
		\[
		W_\om(t)=\int_0^t\frac{\om(s)}{s} \,\mathrm{d}s, \quad t\in [0,\delta_0].
		\]
	\end{definition}
	The  weight $W_\om$ has the following properties. 
	\begin{lem}\cite[Proposition 3.1]{preprint}\label{oscarpro}
		Let $\om$ be a modulus of continuity such that $\om(t)/t\in L^1([0,\delta_0])$. Then the following statements hold:
		\begin{enumerate}
			\item There exists a constant $C>0$ such that
			\[
			\om(t)\leqslant C W_\om(t),\ \ \ t\in [0,\delta_0].
			\]
			\item The function $W_\om$ is a modulus of continuity on $[0,\delta_0]$.
		\end{enumerate}
	\end{lem}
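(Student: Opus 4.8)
The plan is to read off both statements directly from the monotonicity properties built into the definition of a modulus of continuity---in particular that $\om(t)/t$ is almost decreasing---together with the standing hypothesis $\om(t)/t\in L^1([0,\delta_0])$, which guarantees that $W_\om$ is well defined and absolutely continuous.

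To prove the first statement, I would use only that $\om(s)/s$ is almost decreasing: there is a constant $C\geqslant 1$ with $\om(t)/t\leqslant C\,\om(s)/s$ for all $0<s\leqslant t\leqslant\delta_0$, equivalently $\om(s)/s\geqslant C^{-1}\om(t)/t$ on $(0,t]$. Integrating this pointwise lower bound in $s$ over $(0,t)$ gives
\[
W_\om(t)=\int_0^t\frac{\om(s)}{s}\,\mathrm{d}s\ \geqslant\ C^{-1}\,\frac{\om(t)}{t}\int_0^t\mathrm{d}s\ =\ C^{-1}\om(t),
\]
which is exactly the claimed inequality $\om(t)\leqslant C\,W_\om(t)$.

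For the second statement I would verify the three defining conditions of a modulus of continuity in turn. The equality $W_\om(0)=0$ is immediate, and continuity of $W_\om$ on $[0,\delta_0]$ holds because $W_\om$ is the indefinite integral of the $L^1$-function $s\mapsto\om(s)/s$, hence absolutely continuous. That $W_\om$ is almost increasing is trivial, since being the integral of a non-negative function it is in fact non-decreasing. The remaining condition, that $W_\om(t)/t$ is almost decreasing, is the only point requiring genuine work and is the step I expect to be the main (though still mild) obstacle.

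For that last condition, given $0<s\leqslant t\leqslant\delta_0$ I would split $W_\om(t)=\int_0^s\frac{\om(u)}{u}\,\mathrm{d}u+\int_s^t\frac{\om(u)}{u}\,\mathrm{d}u$, so that
\[
\frac{W_\om(t)}{t}=\frac{W_\om(s)}{t}+\frac1t\int_s^t\frac{\om(u)}{u}\,\mathrm{d}u\ \leqslant\ \frac{W_\om(s)}{s}+\frac1t\int_s^t\frac{\om(u)}{u}\,\mathrm{d}u,
\]
where the first summand was bounded using $t\geqslant s$. In the remaining integral the almost-decreasing property gives $\om(u)/u\leqslant C\,\om(s)/s$ for $u\in[s,t]$, so $\frac1t\int_s^t\frac{\om(u)}{u}\,\mathrm{d}u\leqslant C\,\frac{\om(s)}{s}\cdot\frac{t-s}{t}\leqslant C\,\frac{\om(s)}{s}$; applying the first statement to replace $\om(s)$ by $C\,W_\om(s)$ then gives $W_\om(t)/t\leqslant(1+C^2)\,W_\om(s)/s$, as required. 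The one place the reasoning could go wrong is trying to estimate the tail integral $\int_s^t$ without first separating it from the possibly large contribution of the integrand near the origin; arranging the splitting exactly as above ensures the almost-decreasing bound is applied on the correct subinterval.
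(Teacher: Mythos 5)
Your proof is correct and complete: the lower bound $W_\om(t)\geqslant C^{-1}\om(t)$ from integrating the almost-decreasing inequality, and the split $\int_0^t=\int_0^s+\int_s^t$ to verify that $W_\om(t)/t$ is almost decreasing, are exactly the standard argument. The paper itself does not reproduce a proof of this lemma (it imports it by citation from \cite{preprint}), and your verification matches the natural route taken there, so nothing further is needed.
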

	
	\subsection{Elements of the deformation theory on $\mathbb{R}$} In this section, we recall some basic notation and results of the so-called deformation theory and $\alpha$-Hankel transform on $\mathbb{R}$ \cite{deformation,khankel}.
	
	Let $L_{{p,\alpha}}:=L^{p}(\mathbb{R},|x|^{2\alpha-1})$ $(1\leqslant p <+\infty,\alpha>1/4)$ be the space of functions from $L^{p}$ defined on $\mathbb{R}$ endowed with the finite norm 
	\[
\|f\|_{p,\alpha}=\left(c_{\alpha}\int_{\mathbb{R}}|f(x)|^{p}|x|^{2\alpha-1}{\rm d}x\right)^{1/p}, \quad c_\alpha=\frac{1}{2\Gamma(2\alpha)}.
	\]
	Similarly, we can define $L_{\infty, \alpha}$ and associted norm $\|\cdot\|_{\infty, \alpha}.$
	
	The new Dunkl transform (recently called $\alpha$-Hankel transform) of order $\alpha>1/4$ for $f\in L_{1,\alpha}$ is given by 
	\[
	\mathcal{F}_{\alpha}(f)(\lambda)=c_{\alpha}\int_{\mathbb{R}}f(x)B_{\alpha}(\lambda x)|x|^{2\alpha-1}{\rm d}x,\quad \lambda\in\mathbb{R},
	\]
	where the kernel $B_{\alpha}$ is defined by 
	\[B_{\alpha}(\lambda x)=j_{2\alpha-1}(2|\lambda x|^{1/2})-\frac{\Gamma(2\alpha)}{\Gamma(2\alpha+2)}\lambda xj_{2\alpha+1}(x),\]
	where $j_{\alpha}(x)$ is the normalized Bessel function of the first kind of order $\alpha$ given by
	\[j_{\alpha}(x)=\Gamma(1+\alpha)\sum_{k=0}^{+\infty}\frac{(-1)^k}{k! \Gamma(k+\alpha+1)}\left(\frac{x}{2}\right)^{2k},\,\, x\in\mathbb{R}.\]
An important feature of the kernel $B_\alpha(x)$ is that 
\begin{equation}\label{nearzero}
 |B_\alpha(\lambda x)-1|\geqslant C|\lambda x|,\quad\text{for all sufficient small}\quad \lambda x\quad \text{and}\quad \alpha>1/4,    
\end{equation}
in view of the asymptotic behavior near $0$ of the normalized Bessel function, see e.g. \cite[Chapter VII]{watson}. Using the fact that $|B_\alpha(\lambda x)| \leqslant 1$ one can easily see that 
	\begin{equation} \label{L1esti}
		\|\mathcal{F}_{\alpha} f\|_{\infty, \alpha} \leqslant \|f\|_{1, \alpha}.
	\end{equation}
	
The kernel $B_{\alpha}(x)$ is associated with the solution \cite[Theorem 5.7]{deformation} of the following equation 
	\[|x|\Delta_\alpha B_\alpha(\lambda x)+|\lambda|B_\alpha (\lambda x),\]
	where $\Delta_\alpha$ is the well-known Dunkl Laplacian \cite{dunkl}.
 
 The $\alpha$-Hankel transform satisfies the following Plancherel identity: 
	\begin{align}\label{planc}
		\|\mathcal{F}_\alpha f\|_{2, \alpha}=\|f\|_{2, \alpha}.
	\end{align}
The inverse transform is defined as 
	\[
f(x)=c_{\alpha}\int_{\mathbb{R}}\mathcal{F}_{\alpha}(f)(\lambda)B_{\alpha}(\lambda x)|\lambda|^{2\alpha-1}{\rm d}\lambda.
	\]
	By the Plancherel identity \eqref{planc} and the estimate \eqref{L1esti} with the help of the Riesz–Thorin interpolation theorem give the following Hausdorff-Young inequality for the $\alpha$-Hankel transform \cite[Corollary 3.2]{khankel}:
	\begin{equation}\label{(1)}
		\|\mathcal{F}_{\alpha}(f)\|_{q,\alpha}\leqslant C\|f\|_{p,\alpha},
	\end{equation}
	for any $f\in L_{\alpha,p}$ with $1\leqslant p\leqslant 2$  and $q$ such that $\frac{1}{p}+\frac{1}{q}=1,$ where the positive constant $C$ does not depend on $f$.  
	
In this paper we will use the following translation operator defined by \cite[Formula (1.12)]{khankel}:
\[
T_h(f)(x)=\int_{\mathbb{R}}f(z)K_{L}(x,h,z)|z|^{2\alpha-1}{\rm d}z, \quad x,h\in[-\infty,+\infty],
\]
which  satisfies that 
 \begin{equation}\label{(2)}
		\mathcal{F}_{\alpha}(T_{h}f)(\lambda)=B_{\alpha}(\lambda h)\mathcal{F}_{\alpha}(f)(\lambda),\quad\text{for almost every} \quad \lambda\in \mathbb{R},
	\end{equation}
and any $f\in L_{p,\alpha}$ $(1\leqslant p\leqslant 2)$, $\alpha>1/2$ and $h\in\mathbb{R}$ \cite[Theorem 3.3]{khankel}. The explicit form of the kernel $K_L$ can be found in \cite[Theorem 1.1]{khankel}. 
 
It was proved in \cite[Theorem 3.3]{khankel} that $T_h$ is a bounded operator on $L_{\alpha, p}$ for $1\leqslant p\leqslant +\infty.$

	\section{Main Results}\label{mainresults}
	
	In this section, we present the Titchmarsh theorem for Dunkl type transforms by using a modulus of continuity. We present our results by giving two sided estimates with necessary and sufficient conditions. We characterise some Dunkl Lipschitz functions involving a Dunkl translation operator by an asymptotic estimate for the growth of the norm of the Dunkl transform, which in both cases uses a modulus of continuity.

\subsection{Generalized Dunkl--Lipschitz classes by modulus of continuity}
 
We first introduce the generalized Dunkl--Lipschitz classes defined using the Dunkl translation operator. 
	
	\begin{definition}
		Let $\omega$ be a modulus of continuity. A function $f\in L_{p,\alpha}$ $(1<p\leqslant 2, \alpha\geqslant -1/2)$ is said to be in the $\omega$-Dunkl Lipschitz class,  denoted by $DLip(\omega,p)$, if there exists a constant $C>0.$ such that for all sufficiently small $h>0$, 
		\[
		\|T_{h}f-f\|_{p,\alpha}\leqslant C\omega(h).
		\]
		
		We endow the space $DLip(\omega,p)$ with the  norm
		\[
\|f\|_{DLip(\omega,p)}=\|f\|_{p,\alpha}+\|f\|_{\#,DLip(\omega,p)},\quad \|f\|_{\#,DLip(\omega,p)}=\sup_{h\neq 0}\frac{\|T_{h}f-f\|_{p,\alpha}}{\omega(h)},
		\]
		so that it becomes a Banach space.
	\end{definition}
	
	Now, we are ready to present one of the main results of this paper.
	\begin{thm}\label{main1}
		Let $\omega$ be a modulus of continuity. The following statements hold.  
		\begin{enumerate}
			\item Assume that $\omega$ satisfies the Zygmund condition \eqref{cond4} and $f\in L_{p,\alpha}$ with $1<p\leqslant 2$ and $\alpha> 1/4$. If $f\in DLip(\omega,p)$ then there exists a constant $C>0$ such that, for all sufficiently small $h>0$, we have
			\[
				\int_{|\lambda|\geqslant \frac{1}{h}}|\mathcal{F}_{\alpha}(f)(\lambda)|^{q}|\lambda|^{2\alpha-1}{\rm d}\lambda\leqslant C\omega^{q}(h),\quad \frac{1}{p}+\frac{1}{q}=1.
			\]
			\item Assume that $\omega$ satisfies the Zygmund condition \eqref{cond3} and $f\in L_{2,\alpha}$ with $\alpha>1/4$. We also assume that $\omega$ is bounded below on $[\delta_0,+\infty)$ by a positive number, and that $\omega^2(t)/t^{3}\in L^1\big([\delta_0,+\infty)\big)$. If there exists a constant $C>0$ such that for all sufficiently small $h>0$,
			\begin{equation}\label{L2asymtotic}
				\int_{|\lambda|\geqslant \frac{1}{h}}|\mathcal{F}_{\alpha}(f)(\lambda)|^{2}|\lambda|^{2\alpha-1}{\rm d}\lambda\leqslant C\omega^{2}(h),
			\end{equation}
			then $f\in DLip(\omega,2)$.
		\end{enumerate}
	\end{thm}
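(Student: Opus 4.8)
The plan is to move the Dunkl--Lipschitz information to the transform side through the multiplier identity \eqref{(2)}, exploiting the quantitative behaviour of $B_{\alpha}$ near the origin (lower and upper linear bounds) together with the fact that $|B_{\alpha}|\leqslant1$ far from it; the two parts are then handled by complementary arguments, one based on Hausdorff--Young and one on Plancherel.

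\textbf{Part (1).} Let $f\in DLip(\omega,p)$. For every sufficiently small $\eta>0$, applying the Hausdorff--Young inequality \eqref{(1)} to $T_{\eta}f-f\in L_{p,\alpha}$ (recall $T_{\eta}$ is bounded on $L_{p,\alpha}$) and then \eqref{(2)} gives
\[
\Big(c_{\alpha}\int_{\R}|1-B_{\alpha}(\lambda\eta)|^{q}\,|\mathcal{F}_{\alpha}(f)(\lambda)|^{q}|\lambda|^{2\alpha-1}\,\mathrm{d}\lambda\Big)^{1/q}=\|\mathcal{F}_{\alpha}(T_{\eta}f-f)\|_{q,\alpha}\leqslant C\,\|T_{\eta}f-f\|_{p,\alpha}\leqslant C\,\omega(\eta).
\]
Since a single value of $\eta$ does not control $1-B_{\alpha}(\lambda\eta)$ on $\{|\lambda|\geqslant 1/\eta\}$, I would decompose $\{|\lambda|\geqslant1/h\}=\bigcup_{k\geqslant0}A_{k}$ with $A_{k}=\{2^{k}/h\leqslant|\lambda|<2^{k+1}/h\}$, and apply the displayed estimate on $A_{k}$ with $\eta=\eta_{k}:=a\,h\,2^{-k}$, where $a>0$ is fixed once and for all so small that \eqref{nearzero} yields $|1-B_{\alpha}(u)|\geqslant C|u|$ for $a\leqslant|u|\leqslant2a$. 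As $\eta_{k}\leqslant h$ (hence small) and $|\lambda\eta_{k}|\in[a,2a]$ on $A_{k}$, one gets $|1-B_{\alpha}(\lambda\eta_{k})|\geqslant Ca$ on $A_{k}$, and therefore
\[
\int_{A_{k}}|\mathcal{F}_{\alpha}(f)(\lambda)|^{q}|\lambda|^{2\alpha-1}\,\mathrm{d}\lambda\leqslant C\,\omega^{q}(\eta_{k})\leqslant C\,\omega^{q}(h2^{-k}),
\]
the last inequality by the almost--monotonicity of $\omega$. Summing over $k$ and using that \eqref{cond4} puts $\omega\in\mathfrak{Z}^{0}$, i.e. $m(\omega)>0$ by Theorem \ref{rafeiro}, so that $\omega(t)/t^{\gamma}$ is almost increasing for some $\gamma>0$ and hence $\omega(h2^{-k})\leqslant C2^{-k\gamma}\omega(h)$, the geometric series $\sum_{k\geqslant0}\omega^{q}(h2^{-k})$ converges and is dominated by $C\omega^{q}(h)$, which proves (1).

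\textbf{Part (2).} By the Plancherel identity \eqref{planc} and \eqref{(2)},
\[
\|T_{h}f-f\|_{2,\alpha}^{2}=c_{\alpha}\int_{\R}|1-B_{\alpha}(\lambda h)|^{2}\,|\mathcal{F}_{\alpha}(f)(\lambda)|^{2}|\lambda|^{2\alpha-1}\,\mathrm{d}\lambda,
\]
and I would split the integral at $|\lambda|=1/h$. On $\{|\lambda|\geqslant1/h\}$ use $|1-B_{\alpha}(\lambda h)|\leqslant2$ and hypothesis \eqref{L2asymtotic} to bound this part by $4c_{\alpha}C\omega^{2}(h)$. On $\{|\lambda|<1/h\}$ use the upper counterpart of \eqref{nearzero}, $|1-B_{\alpha}(u)|\leqslant C|u|$ for $|u|\leqslant1$ (again a consequence of the Bessel asymptotics behind $B_{\alpha}$), so this part is $\leqslant Ch^{2}\int_{|\lambda|<1/h}|\lambda|^{2}|\mathcal{F}_{\alpha}(f)(\lambda)|^{2}|\lambda|^{2\alpha-1}\,\mathrm{d}\lambda$. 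Writing $g(s)=\int_{|\lambda|>s}|\mathcal{F}_{\alpha}(f)(\lambda)|^{2}|\lambda|^{2\alpha-1}\,\mathrm{d}\lambda$ and inserting $|\lambda|^{2}=2\int_{0}^{|\lambda|}s\,\mathrm{d}s$ followed by Tonelli's theorem, one obtains $\int_{|\lambda|<1/h}|\lambda|^{2}|\mathcal{F}_{\alpha}(f)|^{2}|\lambda|^{2\alpha-1}\,\mathrm{d}\lambda\leqslant2\int_{0}^{1/h}s\,g(s)\,\mathrm{d}s$. Next, $g(s)\leqslant C\omega^{2}(1/s)$ for all $s>0$: for large $s$ this is \eqref{L2asymtotic}, and for bounded $s$ it follows from $g(s)\leqslant g(0)=\|f\|_{2,\alpha}^{2}/c_{\alpha}<\infty$ together with $\omega$ being almost increasing on $[0,\delta_{0}]$ and bounded below on $[\delta_{0},+\infty)$. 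Substituting $t=1/s$,
\[
2\int_{0}^{1/h}s\,g(s)\,\mathrm{d}s\leqslant C\int_{h}^{\delta_{0}}\frac{\omega^{2}(t)}{t^{3}}\,\mathrm{d}t+C\int_{\delta_{0}}^{+\infty}\frac{\omega^{2}(t)}{t^{3}}\,\mathrm{d}t,
\]
where the second term is finite by the assumption $\omega^{2}(t)/t^{3}\in L^{1}([\delta_{0},+\infty))$; for the first, since $\omega(t)/t$ is almost decreasing one has $\omega(t)/t\leqslant C\omega(h)/h$ for $t\geqslant h$, so $\int_{h}^{\delta_{0}}\omega^{2}(t)t^{-3}\,\mathrm{d}t\leqslant C\frac{\omega(h)}{h}\int_{h}^{\delta_{0}}\omega(t)t^{-2}\,\mathrm{d}t\leqslant C\frac{\omega^{2}(h)}{h^{2}}$ by \eqref{cond3}. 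Multiplying back by $h^{2}$ and absorbing the constant term by means of $h\leqslant C\omega(h)$ (which holds because $\omega(t)/t$ is almost decreasing, hence bounded below on $(0,\delta_{0}]$), the low--frequency contribution is also $\leqslant C\omega^{2}(h)$; adding the two pieces gives $\|T_{h}f-f\|_{2,\alpha}\leqslant C\omega(h)$ for all sufficiently small $h$, that is, $f\in DLip(\omega,2)$.

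\textbf{Expected main obstacle.} In Part (1) the essential idea is the annulus--by--annulus rescaling of the translation parameter, so that $|\lambda\eta_{k}|$ lands in the window where the lower bound \eqref{nearzero} is quantitatively sharp; once that is arranged, the convergence of the resulting series is governed exactly by the positivity of the Matuszewska--Orlicz index $m(\omega)$, which is where \eqref{cond4} is genuinely used. In Part (2) the crux is the Hardy--type estimate reducing the low--frequency term to $\int_{h}^{\delta_{0}}\omega^{2}(t)t^{-3}\,\mathrm{d}t\leqslant C\omega^{2}(h)/h^{2}$ via \eqref{cond3}, the hypotheses on $\omega$ at infinity being precisely what make the remaining tail finite and harmless. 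A small but indispensable ingredient throughout is the two--sided relation $|1-B_{\alpha}(u)|\asymp|u|$ as $u\to0$, which must be extracted from the series expansions of the Bessel functions entering the definition of $B_{\alpha}$.
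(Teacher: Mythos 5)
Your proof is correct and follows the same skeleton as the paper's: dyadic annuli, Hausdorff--Young and the positivity of the Matuszewska--Orlicz index $m(\omega)$ (via Theorem \ref{rafeiro} and \eqref{cond4}) for part (1); Plancherel, a splitting at $|\lambda|=1/h$, and a Hardy-type estimate driven by \eqref{cond3} for part (2). The one substantive variation is in part (1): the paper takes the translation step equal to $h/2^{k}$ on the annulus $\{2^{k}/h\leqslant|\lambda|\leqslant 2^{k+1}/h\}$, so that $|\lambda h/2^{k}|\in[1,2]$, and invokes the lower bound $|1-B_{\alpha}(u)|\geqslant C$ for $|u|\geqslant 1$ from \cite[Lemma 1]{extension}, whereas you rescale the step to $\eta_{k}=ah2^{-k}$ so that $|\lambda\eta_{k}|$ lands in a fixed small window $[a,2a]$ where the paper's own near-zero estimate \eqref{nearzero} applies. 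Your variant is self-contained in the sense that it only needs the small-argument behaviour of $B_{\alpha}$ (at the cost of carrying the auxiliary parameter $a$), while the paper's choice avoids any tuning but leans on the external lemma. In part (2), your replacement of the paper's integration by parts with a layer-cake/Tonelli argument for the tail function $g(s)$, and your symmetric treatment of positive and negative frequencies in one stroke (the paper splits into the cases $\lambda>0$ and $\lambda<0$), are equivalent reformulations; all the remaining steps, including the final absorption of the constant term via $h\leqslant C\omega(h)$, match the paper's argument.
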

	\begin{proof}
		We begin the proof by proving the first assertion. For that, we take $|\lambda|\in\big[\frac{1}{h},\frac{2}{h}\big]$ for $h>0$. Since $|\lambda h|\geqslant 1$ and by \cite[Lemma 1]{extension} we have $C\leqslant |1-B_{\alpha}(\lambda h)|^{q}$ for $q$ such that $1/p+1/q=1$ and $C$ is a positive constant which depends on $\alpha$ and $q$. By  \eqref{(1)}, \eqref{(2)} and $f\in DLip(\omega,p)$, there exists a constant $C$ such that 
		\begin{align*}
			\int_{\frac{1}{h}\leqslant |\lambda|\leqslant \frac{2}{h}}|\mathcal{F}_{\alpha}(f)(\lambda)|^{q}&|\lambda|^{2\alpha-1}{\rm d}\lambda \leqslant\frac{1}{C}\int_{\frac{1}{h}\leqslant |\lambda|\leqslant \frac{2}{h}}|1-B_{\alpha}(\lambda h)|^{q}|\mathcal{F}_{\alpha}(f)(\lambda)|^{q}|\lambda|^{2\alpha-1}{\rm d}\lambda \\
			& \leqslant\frac{1}{C}\int_{\mathbb{R}}|1-B_{\alpha}(\lambda h)|^{q}|\mathcal{F}_{\alpha}(f)(\lambda)|^{q}|\lambda|^{2\alpha-1}{\rm d}\lambda \\
			&\leqslant\frac{1}{C}\|\mathcal{F}_\alpha(T_{h}f-f)\|_{q,\alpha}^{q}\leqslant C_1\|T_{h}f-f\|_{p,\alpha}^{q} \leqslant C_2\omega^{q}(h),
		\end{align*} 
		where in the penultimate inequality we have used the Hausdorff-Young inequality for $\mathcal{F}_\alpha,$ that is,  $\|\mathcal{F}_\alpha f\|_{q,\alpha}\leqslant C\|f\|_{p,\alpha}$.  
		Therefore, we have
		\[
		\int_{\frac{1}{h}\leqslant |\lambda|\leqslant \frac{2}{h}}|\mathcal{F}_{\alpha}(f)(\lambda)|^{q}|\lambda|^{2\alpha-1}{\rm d}\lambda\leqslant  C_2\omega^{q}(h),
		\]
		which implies that 
		\[
		\int_{|\lambda|\geqslant\frac{1}{h}}|\mathcal{F}_{\alpha}(f)(\lambda)|^{q}|\lambda|^{2\alpha-1}{\rm d}\lambda=\sum_{k=0}^{+\infty}\int_{\frac{2^{k}}{h}\leqslant |\lambda|\leqslant \frac{2^{k+1}}{h}}|\mathcal{F}_{\alpha}(f)(\lambda)|^{q}|\lambda|^{2\alpha-1}{\rm d}\lambda \leqslant C\sum_{k=0}^{+\infty}\omega^{q}\left(\frac{h}{2^{k}}\right).
		\]
		Notice now that by the Zygmund condition \eqref{cond4} we have that $\omega\in\mathfrak{Z}^{0}$, and then by Theorem \ref{rafeiro} it follows that $m(\omega)>0.$ Moreover, we can choose and fix some $\delta>0$ such that $m(\omega)>\delta>0$ (see \eqref{eqm}) and then the function $\frac{\omega(t)}{t^{\delta}}$ is almost increasing on $[0,\delta_0]$ by Theorem \ref{rafeiro}. Since $h/2^{k}\leqslant h$ for any $k=0,1,2,\ldots,$ we get
		\[\frac{\omega\left(h/2^{k}\right)}{\left(h/2^{k}\right)^{\delta}}\leqslant C\frac{\omega(h)}{h^{\delta}}\Longrightarrow \omega^{q}\left(h/2^{k}\right)\leqslant C\frac{\omega^{q}(h)}{2^{kq\delta}}. \]    
		Hence
		\begin{align*}
			\int_{|\lambda|\geqslant\frac{1}{h}}|\mathcal{F}_{\alpha}(f)(\lambda)|^{q}|\lambda|^{2\alpha-1}{\rm d}\lambda\leqslant C\sum_{k=0}^{+\infty}\omega^{q}\left(\frac{h}{2^{k}}\right)\leqslant C\omega^{q}(h)\sum_{k=0}^{+\infty}\left(\frac{1}{2^{k}}\right)^{\delta q}\leqslant C\omega^{q}(h).  
		\end{align*}
		Now we prove the second statement. Notice that $|B_\alpha(\lambda x)|\leqslant 1$ for any $x,\lambda\in\mathbb{R}$ and $\alpha>1/4$ (see \cite[Lemma 2.9 ]{BBS20}). This inequality was first showed in \cite[Pages 18-19]{kernel1} for $\alpha\geqslant1/2.$ So, by Plancherel theorem,  \eqref{(2)}, \eqref{L2asymtotic} and \cite[Inequality (23)]{extension} we have 
		\begin{align} \label{neqd}
			\|T_{h}f&-f\|_{2,\alpha}^{2}=\|\mathcal{F}_\alpha(T_{h}f-f)\|_{2,\alpha}^{2}=c_{\alpha}\int_{\mathbb{R}}|1-B_{\alpha}(\lambda h)|^{2}|\mathcal{F}_{\alpha}(f)(\lambda)|^{2}|\lambda|^{2\alpha-1}{\rm d}\lambda \nonumber\\
			&\leqslant 4c_\alpha\int_{|\lambda|\geqslant\frac{1}{h}}|\mathcal{F}_{\alpha}(f)(\lambda)|^{2}|\lambda|^{2\alpha-1}{\rm d}\lambda+c_\alpha\int_{|\lambda|<\frac{1}{h}}|1-B_{\alpha}(\lambda h)|^{2}|\mathcal{F}_{\alpha}(f)(\lambda)|^{2}|\lambda|^{2\alpha+1}{\rm d}\lambda \nonumber\\
			&\leqslant C\omega^{2}(h)+4c_\alpha\int_{|\lambda|<\frac{1}{h}}|\lambda h|^{2}|\mathcal{F}_{\alpha}(f)(\lambda)|^{2}|\lambda|^{2\alpha+1}{\rm d}\lambda.
		\end{align}
		We consider the two possible cases over $\lambda$, i.e. $\lambda>0$ or $\lambda<0$. 
		
		{\bf Case $\lambda>0$}. Let $\phi(t)=\int_t^{+\infty}\big|\mathcal{F}_{\alpha}(f)(s)\big|^2 s^{2\alpha-1}\,\mathrm{d}s$ for any $t>0$. By condition \eqref{L2asymtotic} we have that $\phi(t)\leqslant C\omega^2(\frac{1}{t})$ for all sufficiently large $t$. Moreover, since $f$ and therefore $\mathcal{F}_{\alpha}(f)$ are in $L_{2,\alpha}$, the function $\phi(t)$ is bounded on $\mathbb{R}$. By hypothesis we know that a modulus of continuity $\omega$ is bounded below on all intervals of the form $[\delta_0,+\infty)$, which implies that we can write $\phi(t)\leqslant C\omega^2(\frac{1}{t})$ for all $t>0$. By using the latter fact and for any large enough $x>0$, we get 
		\begin{align}\label{need}
			\int_{0}^x &t^2\big|\mathcal{F}_{\alpha}(f)(t)\big|^2 t^{2\alpha-1}\,\mathrm{d}t=-\int_0^x t^2\phi^{\prime}(t)\,\mathrm{d}t=-x^2 \phi(x)+2\int_0^x t\phi(t)\,\mathrm{d}t \nonumber\\
			&\leqslant 2\int_0^x t\phi(t)\,\mathrm{d}t\leqslant C\int_0^x t\omega^{2}(1/t)\,\mathrm{d}t=C\int_{\frac1{x}}^{+\infty}\frac{\omega^2(u)}{u^3}\,\mathrm{d}u \nonumber \\
			&=C\left(\int_{\frac1{x}}^{\delta_0}\frac{\omega^2(u)}{u^3}\,\mathrm{d}u+\int_{\delta_0}^{+\infty}\frac{\omega^2(u)}{u^3}\,\mathrm{d}u\right)=:C\big(J_1+J_2\big).
		\end{align}
		The first integral can be estimated as follows: 
		\[J_1\leqslant C_1\frac{\omega(1/x)}{1/x}\int_{\frac1{x}}^{\delta_0}\frac{\omega(u)}{u^2}\,\mathrm{d}u\leqslant C_1^{\prime}\frac{\omega^2(1/x)}{(1/x)^2},\]
		since $\omega(t)/t$ is almost decreasing on $[0,\delta_0]$ and the Zygmund condition \eqref{cond3} holds. 
		
		Next, the second integral $J_2\leqslant C_2<+\infty$ since $\omega^2(t)/t^{3}\in L^{1}\big([\delta_0,\infty)\big)$. Thus, from \eqref{need}, we obtain
		\begin{equation}\label{needaqui}
			\int_{0}^x t^2\big|\mathcal{F}_{\alpha}(f)(t)\big|^2 t^{2\alpha-1}\,\mathrm{d}t \leqslant C_3\left(\frac{\omega^2(1/x)}{(1/x)^2}+C_2\right).
		\end{equation}
		Hence, we have
		\begin{align*}
			\int_{|\lambda|<\frac{1}{h}}|\lambda h|^{2}|\mathcal{F}_{\alpha}(f)(\lambda)|^{2}&|\lambda|^{2\alpha-1}{\rm d}\lambda=h^{2}\int_0^{1/h}\lambda^{2}|\mathcal{F}_{\alpha}(f)(\lambda)|^{2}\lambda^{2\alpha-1}{\rm d}\lambda \\
			&\leqslant Ch^{2}\left(\frac{\omega^2(h)}{h^2}+C_2\right)\leqslant C\omega^2(h),
		\end{align*}
		where in the last inequality above we have used that $C\omega^2(h)\geqslant h^2$, which can be easily deduced using the fact that $0<h<1$ is sufficiently small and $\omega(t)/t$ is almost decreasing on $[0,\delta_0]$.
		
		{\bf Case $\lambda<0$}. This case is easier than the last one. By using similar arguments and $\psi(t)=\int_{-\infty}^{t}\big|\mathcal{F}_{\alpha}(f)(s)\big|^2 |s|^{2\alpha-1}\,\mathrm{d}s$ for any $t<0$, we have  
		\begin{align*}
			\int_{-\lambda<\frac{1}{h}}|\lambda|^{2}&|\mathcal{F}_{\alpha}(f)(\lambda)|^{2}|\lambda|^{2\alpha-1}{\rm d}\lambda=\int_{-\frac{1}{h}}^{0}|\lambda|^{2}|\mathcal{F}_{\alpha}(f)( \lambda)|^{2}|\lambda|^{2\alpha-1}{\rm d}\lambda \\
			&=\int_0^{\frac{1}{h}}r^{2}|\mathcal{F}_{\alpha}(f)(-r)|^{2}r^{2\alpha-1}{\rm d}r=-\int_0^{\frac{1}{h}}r^{2}\psi^{\prime}(-r)dr \\
			&=\frac{\psi(-1/h)}{h^2}-2 \int_0^{\frac{1}{h}}r\psi(-r)dr\leqslant \frac{\psi(-1/h)}{h^2}\leqslant C\frac{\omega^2(h)}{h^2}.
		\end{align*}
		Then
		\begin{align*}
			\int_{|\lambda|<\frac{1}{h}}&|\lambda h|^{2}|\mathcal{F}_{\alpha}(f)(\lambda)|^{2}|\lambda|^{2\alpha-1}{\rm d}\lambda\leqslant Ch^{2}\frac{\omega^2(h)}{h^2}.
		\end{align*}
		Therefore, by the above two cases and \eqref{neqd}, we conclude that $\|T_{h}f-f\|_{2,\alpha}^{2}\leqslant C\omega^2(h)$, proving the second statement of the theorem.
	\end{proof}

%\begin{rem}
%Notice that the second statement in Theorem \ref{main1} is basically saying that condition \eqref{L2asymtotic} provides the boundedness of the same type for
%\[
%\int_{|\lambda|<\frac{1}{h}}|\mathcal{F}_{\alpha}(f)(\lambda)|^{2}|\lambda|^{2\alpha+1}{\rm d}\lambda \leqslant C\omega^2(h),
%\]
%which is the same conclusion of the classical Titchmarsh's result \cite[Theorem 85]{ti}.
%\end{rem}
 
	The following corollary follows immediately from Theorem \ref{main1} and provides a characterisation of $DLip(\omega, 2)$ in terms of the asymptotic behaviour of the Dunkl transform.
	\begin{cor}\label{equivalence}
		Let $\omega$ be a modulus of continuity satisfying the conditions \eqref{cond4} and \eqref{cond3}. Suppose that $\omega(t)$ is bounded below on $[\delta_0,+\infty)$ by a positive number, and that $\omega^2(t)/t^{3}\in L^1\big([\delta_0,+\infty)\big)$. Let us take $f\in L_{2,\alpha}$ with $\alpha>1/4$. Then the following statements are equivalent.  
		\begin{enumerate}
			\item There exists a constant $C>0$ such that for all sufficiently small $h>0$, 
			\[
			\|T_{h}f-f\|_{2,\alpha}\leqslant C\omega(h).
			\]
			\item There exists a constant $C>0$ such that for all sufficiently small $h>0$,
			\[
			\int_{|\lambda|\geqslant \frac{1}{h}}|\mathcal{F}_{\alpha}(f)(\lambda)|^{2}|\lambda|^{2\alpha+1}{\rm d}\lambda\leqslant C\omega^{2}(h).
			\]
		\end{enumerate}
	\end{cor}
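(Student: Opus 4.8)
The plan is to obtain the corollary as the $p=q=2$ specialization of Theorem~\ref{main1}, gluing its two halves together. The mechanism in both directions is the Plancherel identity \eqref{planc}, which turns the translation difference into $\|T_hf-f\|_{2,\alpha}^2=c_\alpha\int_{\mathbb R}|1-B_\alpha(\lambda h)|^2|\mathcal F_\alpha(f)(\lambda)|^2|\lambda|^{2\alpha-1}\,\mathrm d\lambda$; note that the weight intrinsic to this identity, and hence to every tail estimate coming out of it, is $|\lambda|^{2\alpha-1}$. I would therefore expect the weight-matching against the $|\lambda|^{2\alpha+1}$ appearing in statement~(2) to be the delicate point, and I treat the two implications separately because they behave very differently with respect to that weight.

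For the implication $(2)\Rightarrow(1)$ I would argue directly, and this direction does accommodate the literal weight $|\lambda|^{2\alpha+1}$ of statement~(2). The key observation is the pointwise comparison $|\lambda|^{2\alpha-1}\leqslant h^2|\lambda|^{2\alpha+1}$, which holds precisely on the tail $|\lambda|\geqslant 1/h$ (there $h^2|\lambda|^2\geqslant 1$). Integrating it against $|\mathcal F_\alpha(f)|^2$ shows that the hypothesis in (2) forces $\int_{|\lambda|\geqslant 1/h}|\mathcal F_\alpha(f)(\lambda)|^2|\lambda|^{2\alpha-1}\,\mathrm d\lambda\leqslant Ch^2\omega^2(h)\leqslant C\omega^2(h)$ for small $h$, which is exactly the hypothesis \eqref{L2asymtotic} of Theorem~\ref{main1}(2). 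Feeding this into that part — its standing assumptions that $\omega$ is bounded below on $[\delta_0,+\infty)$ and $\omega^2(t)/t^3\in L^1([\delta_0,+\infty))$ are already among the hypotheses of the corollary — yields $f\in DLip(\omega,2)$, i.e. statement~(1). So this implication is immediate, and the heavier weight only makes the hypothesis stronger than strictly needed.

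For $(1)\Rightarrow(2)$ I would feed $f\in DLip(\omega,2)$ into Theorem~\ref{main1}(1) with $p=q=2$, which produces the shell bound $\int_{1/h\leqslant|\lambda|\leqslant 2/h}|\mathcal F_\alpha(f)|^2|\lambda|^{2\alpha-1}\,\mathrm d\lambda\leqslant C\omega^2(h)$ and sums the dyadic shells, the geometric majorant $\sum_k\omega^2(h/2^k)\leqslant C\omega^2(h)$ being controlled by the almost-increasing behaviour of $\omega(t)/t^\delta$ for some $0<\delta<m(\omega)$ (Theorem~\ref{rafeiro} and \eqref{eqm}). This is where the main obstacle lies: the argument delivers the tail with weight $|\lambda|^{2\alpha-1}$, whereas statement~(2) demands $|\lambda|^{2\alpha+1}$. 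Upgrading the weight costs a factor $|\lambda|^2$, bounded by $4^{k+1}/h^2$ on the $k$-th shell, so the majorant becomes $(C/h^2)\sum_k 4^k\omega^2(h/2^k)\sim(C\omega^2(h)/h^2)\sum_k 4^{k(1-\delta)}$, and this series diverges: for a modulus of continuity one has $\delta<m(\omega)\leqslant M(\omega)<1$ (by \eqref{cond3} and Theorem~\ref{rafeiro}), hence $1-\delta>0$. Thus the dyadic argument cannot absorb the heavier weight, and the forward implication is obtainable from Theorem~\ref{main1} only with the weight $|\lambda|^{2\alpha-1}$. Reading the exponent in statement~(2) as $2\alpha-1$ — consistently with the hypothesis \eqref{L2asymtotic} of Theorem~\ref{main1}(2) — makes both implications immediate and exhibits the corollary as a direct consequence of the two parts of Theorem~\ref{main1}.
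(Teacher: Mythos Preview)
Your reading is correct and matches the paper's own justification: the corollary is stated as an immediate consequence of the two parts of Theorem~\ref{main1} specialised to $p=q=2$, with no additional argument. You have also correctly spotted that the exponent $2\alpha+1$ in statement~(2) is a misprint for $2\alpha-1$: both the conclusion of Theorem~\ref{main1}(1) and the hypothesis~\eqref{L2asymtotic} of Theorem~\ref{main1}(2) carry the weight $|\lambda|^{2\alpha-1}$, and the parallel Corollary for $W_\omega$ later in the paper is stated with $|\lambda|^{2\alpha-1}$. With that correction both implications are immediate, exactly as you say.

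Your analysis goes a step beyond the paper by treating the printed exponent $2\alpha+1$ seriously and showing precisely where it breaks: the backward implication $(2)\Rightarrow(1)$ survives via the trivial comparison $|\lambda|^{2\alpha-1}\leqslant h^2|\lambda|^{2\alpha+1}$ on $\{|\lambda|\geqslant 1/h\}$, while the forward implication $(1)\Rightarrow(2)$ genuinely fails because the dyadic majorant picks up a divergent factor $\sum_k 4^{k(1-\delta)}$ with $\delta<1$. That diagnosis is sound and is the right way to see that the literal statement cannot be what was intended.
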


The first statement of Theorem \ref{main1} provides a necessary condition in terms of the asymptotic behaviour of $L_{q,\alpha}$ norm of the Dunkl  transform  for a function to be in  Dunkl-Lipschitz space $DLip(\omega, p).$ In this following result we provide an improvement of Theorem \ref{main1} and provide conditions in terms of the modulus of continuity for the $L_{\nu, \alpha}$-boundedness, $\nu \leqslant q$ of the Dunkl transform  of a function in  $DLip(\omega, p).$  In particular, we recover the improvement of the Hausdorff-Young inequality for the Dunkl transform in the setting of Dunkl-Lipschitz spaces (see Section \ref{exapl}).

\begin{thm}\label{fourier}
Let $\omega$ be a modulus of continuity satisfying the Zygmund condition \eqref{cond4}. Let $f\in L_{p,\alpha}$ with $1<p\leqslant 2$ and $\alpha>1/4$. If $f\in DLip(\omega,p)$ then $\mathcal{F}_{\alpha}(f)$ belongs to $L_{\nu,\alpha}$ $(\nu\leqslant q, 1/p+1/q=1)$ whenever 
\begin{equation}\label{twocond}
\frac{\omega^{\nu}(t)}{t^{(2\alpha)\left(1-\frac{\nu}{q}\right)+1}}\in L^{1}[0,1]\quad\text{and}\quad \lim_{h\to0}\frac{\omega^{\nu}(h)}{h^{(2\alpha)\left(1-\frac{\nu}{q}\right)}}<+\infty.
\end{equation}
\end{thm}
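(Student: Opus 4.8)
The plan is to bootstrap from the first part of Theorem \ref{main1}. Since $\omega$ satisfies \eqref{cond4} and $f\in DLip(\omega,p)$, that theorem yields an $R_0\geqslant1$ with
\[
\Psi(R):=\int_{|\lambda|\geqslant R}|\mathcal{F}_{\alpha}(f)(\lambda)|^{q}|\lambda|^{2\alpha-1}\,{\rm d}\lambda\leqslant C\,\omega^{q}(1/R),\qquad R\geqslant R_0 ,
\]
while the Hausdorff-Young inequality \eqref{(1)} gives $\mathcal{F}_{\alpha}(f)\in L_{q,\alpha}$. It suffices to prove $\int_{\R}|\mathcal{F}_{\alpha}(f)(\lambda)|^{\nu}|\lambda|^{2\alpha-1}\,{\rm d}\lambda<+\infty$. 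First I would split this integral over $\{|\lambda|\leqslant R_0\}$ and $\{|\lambda|>R_0\}$, assuming $R_0=2^{k_0}$ for an integer $k_0$ without loss of generality. On the bounded region the weight $|\lambda|^{2\alpha-1}$ is integrable because $\alpha>1/4$, so H\"older's inequality with exponents $q/\nu$ and $q/(q-\nu)$ (which is vacuous if $\nu=q$) together with $\mathcal{F}_{\alpha}(f)\in L_{q,\alpha}$ makes this contribution finite; I have to treat it separately precisely because Theorem \ref{main1} controls $\Psi$ only for $R\geqslant R_0$.

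For $\{|\lambda|>2^{k_0}\}$ the plan is a dyadic decomposition $\bigsqcup_{k\geqslant k_0}\{2^{k}\leqslant|\lambda|<2^{k+1}\}$ followed, on each annulus, by H\"older with exponents $q/\nu$ and $q/(q-\nu)$:
\[
\int_{2^{k}\leqslant|\lambda|<2^{k+1}}|\mathcal{F}_{\alpha}(f)|^{\nu}|\lambda|^{2\alpha-1}{\rm d}\lambda\leqslant\Psi(2^{k})^{\frac{\nu}{q}}\Big(\int_{2^{k}\leqslant|\lambda|<2^{k+1}}|\lambda|^{2\alpha-1}{\rm d}\lambda\Big)^{1-\frac{\nu}{q}}\leqslant C\,\omega^{\nu}(2^{-k})\,2^{2\alpha k(1-\frac{\nu}{q})},
\]
where I use $\Psi(2^{k})\leqslant C\omega^{q}(2^{-k})$ and that a dyadic annulus has $|\lambda|^{2\alpha-1}$-measure comparable to $2^{2\alpha k}$. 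Summing over $k$, the high-frequency part is controlled by $C\sum_{k\geqslant k_0}\omega^{\nu}(2^{-k})\big/(2^{-k})^{2\alpha(1-\frac{\nu}{q})}$.

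The last step is to compare this series with the integral in \eqref{twocond}. Using the doubling property $\omega(2t)\leqslant C_2\omega(t)$ and the almost monotonicity of $\omega$ recalled in Section \ref{preli}, one has $\omega(t)\asymp\omega(2^{-k})$ and $t\asymp2^{-k}$ for $t\in[2^{-k-1},2^{-k}]$, so that
\[
\frac{\omega^{\nu}(2^{-k})}{(2^{-k})^{2\alpha(1-\frac{\nu}{q})}}\leqslant C\int_{2^{-k-1}}^{2^{-k}}\frac{\omega^{\nu}(t)}{t^{2\alpha(1-\frac{\nu}{q})+1}}\,{\rm d}t ,
\]
and summation over $k\geqslant k_0$ bounds the whole high-frequency part by $C\int_0^{1}\omega^{\nu}(t)\,t^{-2\alpha(1-\frac{\nu}{q})-1}\,{\rm d}t$, which is finite by the first condition in \eqref{twocond}. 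The second condition in \eqref{twocond} is exactly the requirement that $\omega^{\nu}(h)/h^{2\alpha(1-\frac{\nu}{q})}$ remains bounded as $h\to0$, i.e.\ that the general term of the series tends to $0$, so that this comparison is legitimate; alternatively, if one runs the tail estimate through an integration by parts on $\{\lambda>0\}$ and $\{\lambda<0\}$ separately (with $\phi(t)=\int_t^{+\infty}|\mathcal{F}_{\alpha}(f)(s)|^{q}s^{2\alpha-1}\,{\rm d}s$, as in the proof of Theorem \ref{main1}), it is precisely what forces the boundary term at infinity to vanish.

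I expect the series-to-integral comparison to be the only genuinely delicate point, resting as it does on the quasi-monotonicity and doubling of a modulus of continuity and on correctly isolating the low-frequency block, where only Hausdorff-Young is available. Finally, when $\nu=q$ the H\"older step becomes trivial, \eqref{twocond} reduces to $\omega^{q}(t)/t\in L^{1}[0,1]$, and the statement recovers the announced refinement of the Hausdorff-Young inequality \eqref{(1)} for the $\alpha$-Hankel transform on $DLip(\omega,p)$.
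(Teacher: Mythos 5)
Your proof is correct, but it runs along a genuinely different route from the paper's. The paper does not invoke Theorem \ref{main1}(1) at all: it derives a \emph{low-frequency} weighted estimate $\int_{|\lambda|<1/h}|\lambda|^{q}|\mathcal{F}_{\alpha}(f)(\lambda)|^{q}|\lambda|^{2\alpha-1}\,{\rm d}\lambda\leqslant Ch^{-q}\omega^{q}(h)$ directly from the kernel lower bound \eqref{nearzero} near the origin, applies H\"older on the ball $\{|\lambda|<y\}$, and then performs an integration by parts (continuous Abel summation) with $\phi(y)=\int_1^{y}s^{\nu}|\mathcal{F}_{\alpha}(s)|^{\nu}s^{2\alpha-1}\,{\rm d}s$; the boundary term $y^{-\nu}\phi(y)$ is precisely where the second condition in \eqref{twocond} is consumed. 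You instead recycle the already-proved \emph{high-frequency} tail bound $\Psi(R)\leqslant C\omega^{q}(1/R)$ of Theorem \ref{main1}(1), apply H\"older on dyadic annuli, and compare the resulting series with the integral in \eqref{twocond} via doubling and almost monotonicity. Your version is more modular (it bootstraps from a result already in the paper rather than re-running the kernel estimates) and it makes visible that only the first condition in \eqref{twocond} is actually doing work --- indeed, since $\omega$ is almost increasing, $\int_{h}^{2h}\omega^{\nu}(t)t^{-2\alpha(1-\nu/q)-1}\,{\rm d}t\gtrsim\omega^{\nu}(h)h^{-2\alpha(1-\nu/q)}$ shows the integral condition already forces the limit condition, so the second hypothesis is redundant; the paper's integration-by-parts structure needs it explicitly only because of the boundary term. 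The two treatments of the low-frequency block $\{|\lambda|\leqslant R_0\}$ (H\"older plus local integrability of $|\lambda|^{2\alpha-1}$, valid since $\alpha>1/4$) coincide. The one step in your argument that deserves the care you gave it is the series-to-integral comparison: you correctly note that $\omega(2^{-k})\leqslant C\omega(t)$ for $t\in[2^{-k-1},2^{-k}]$ follows from the doubling property combined with almost increasingness, and that $1-\nu/q\geqslant 0$ keeps the power weight monotone in the right direction; with those two observations the comparison is legitimate and the proof closes.
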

\begin{proof} Suppose that $f \in DLip(\omega, p).$ 
Applying the Hausdorff-Young inequality for $\mathcal{F}_\alpha$ along with \eqref{(2)}, we have 
\[
\int_{\mathbb{R}}|1-B_{\alpha}(\lambda h)|^{q}|\mathcal{F}_{\alpha}(f)(\lambda)|^{q}|\lambda|^{2\alpha-1}{\rm d} \lambda \leqslant \|T_h f-f\|_{p, \alpha}^q \leqslant C\omega^q(h),
\]
for sufficiently small  $h>0$  and for $q \in  [2, +\infty)$ such that $1/p+1/q=1$.  Let us first observe using \eqref{nearzero} that 
\begin{align*}
\int_{|\lambda h|<1}|\lambda|^{q}|\mathcal{F}_{\alpha}(f)(\lambda)|^{q}|\lambda|^{2\alpha-1}{\rm d}\lambda&\leqslant Ch^{-q}\int_{|\lambda h|<1}|1-e_{\alpha}(\lambda h)|^{q}|\mathcal{F}_{\alpha}(f)(\lambda)|^{q}|\lambda|^{2\alpha-1}{\rm d}\lambda \\
&\leqslant Ch^{-q}\omega^q(h),    
\end{align*}
holds for $h>0$ sufficiently small. Since $\nu\leqslant q,$ we substitute $y=\frac{1}{h}$ and then apply H\"older's inequality to obtain
\begin{align} \label{eqq16}
\int_{|\lambda|<y}|\lambda|^{\nu}&|\mathcal{F}_{\alpha}(f)(\lambda)|^{\nu}|\lambda|^{2\alpha-1}{\rm d}\lambda \nonumber\\
&\leqslant\left(\int_{|\lambda|<y}|\lambda|^{q}|\mathcal{F}_{\alpha}(f)(\lambda)|^{q}|\lambda|^{2\alpha-1}{\rm d}\lambda\right)^{\nu/q}\left(\int_{|\lambda|<y}|\lambda|^{2\alpha-1}{\rm d}\lambda\right)^{1-\nu/q} \nonumber \\
&\leqslant Cy^{\nu}\omega^{\nu}(1/y)y^{(2\alpha)\left(1-\frac{\nu}{q}\right)}    \end{align}
for sufficiently  large $y \in (1, \infty).$
Now, we have 
\begin{align} \label{eq16}
    \int_{\mathbb{R}}|\mathcal{F}_\alpha(f)(\lambda)|^{\nu}|\lambda|^{2\alpha-1}{\rm d}\lambda &= \int_{|\lambda|\leqslant 1 }|\mathcal{F}_\alpha(f)(\lambda)|^{\nu}|\lambda|^{2\alpha-1}{\rm d}\lambda+ \int_{|\lambda|>1}|\mathcal{F}_\alpha(f)(\lambda)|^{\nu}|\lambda|^{2\alpha-1}{\rm d}\lambda\nonumber \\&:=I_1+I_2.
\end{align}
Since one can easily see that 
$$I_1=\int_{|\lambda|\leqslant 1 }|\mathcal{F}_\alpha(f)(\lambda)|^{\nu}|\lambda|^{2\alpha-1}{\rm d}\lambda = O(1),$$
it is evident from \eqref{eq16} that it is enough to prove  
\begin{align}
    I_2=\lim_{y \rightarrow \infty} I(y)=\lim_{y \rightarrow \infty} \int_{1<|\lambda|<y}|\mathcal{F}_\alpha(f)(\lambda)|^{\nu}|\lambda|^{2\alpha-1}{\rm d}\lambda =O(1),
\end{align}
in order to show that
 $\mathcal{F}_{\alpha}(f)\in L_{\nu,\alpha}$.
 Let us only consider the positive part since the other one is analogous, that is, we show that
 
 $$\lim_{y \rightarrow \infty} I_+(y):=\lim_{y \rightarrow \infty} \int_{1<\lambda<y}|\mathcal{F}_\alpha(f)(\lambda)|^{\nu}\lambda^{2\alpha-1}{\rm d}\lambda =O(1).$$
 For this, we set
\[
\phi(y):=\int_1^{y}s^{\nu}|\mathcal{F}_\alpha(s)|^{\nu}s^{2\alpha-1}{\rm d}s. 
\]
Therefore, by estimate \eqref{eqq16} we obtain
\begin{align*}
I_+(y)&=\int_1^{y}s^{-\nu}\phi^{\prime}(s){\rm d}s=s^{-\nu}\phi(s)\big|_{1}^{y}\big.+\nu\int_1^{y}s^{-\nu-1}\phi(s){\rm d}s \\
&\leqslant C\omega^{\nu}(1/y)y^{(2\alpha)\left(1-\frac{\nu}{q}\right)}+\nu\int_{1}^{y}s^{-\nu-1}s^{\nu}\omega^{\nu}(1/s)s^{(2\alpha)\left(1-\frac{\nu}{q}\right)}{\rm d}s \\
&=C\omega^{\nu}(1/y)y^{(2\alpha)\left(1-\frac{\nu}{q}\right)}+\nu\int_{1}^{y}\omega^{\nu}(1/s)s^{(2\alpha)\left(1-\frac{\nu}{q}\right)-1}{\rm d}s \\
&=C\omega^{\nu}(1/y)y^{(2\alpha)\left(1-\frac{\nu}{q}\right)}+\nu\int_{1/y}^{1}\frac{\omega^{\nu}(u)}{u^{(2\alpha)\left(1-\frac{\nu}{q}\right)+1}}{\rm d}u.
\end{align*}
As we are interested to know the behaviour near infinity of $y$, let us denote $h=1/y$ to get
\[
\lim_{y \rightarrow \infty} I_+(y):=\lim_{h \rightarrow 0}I(1/h)\leqslant C\frac{\omega^{\nu}(h)}{h^{(2\alpha)\left(1-\frac{\nu}{q}\right)}}+\nu\int_{h}^{1}\frac{\omega^{\nu}(u)}{u^{(2\alpha)\left(1-\frac{\nu}{q}\right)+1}}{\rm d}u=O(1),
\]
provided that  conditions in \eqref{twocond} hold. This concludes the proof of this theorem.
\end{proof}

\subsection{Generalized Dunkl--Lipschitz classes by a larger class of modulus of continuity} 

Now we present some more general results involving a larger class of modulus of continuity $W_\omega$. In particular, the next result is a generalised version of Theorem \ref{main1}. 
	
	\begin{thm}\label{main2}
		Let $\omega$ be a modulus of continuity such that $\omega(t)/t\in L^1([0,\delta_0])$. The following statements hold. 
		
		\begin{enumerate}
			\item Let $W_\omega$ satisfy the Zygmund condition \eqref{cond4} and $f\in L_{p,\alpha}$ with $1<p\leqslant 2$ and $\alpha>1/2$. If $f\in DLip(W_\omega,p)$ then there exists a constant $C>0$ such that, for all sufficiently small $h>0$, we have
			\begin{equation}
				\int_{|\lambda|\geqslant \frac{1}{h}}|\mathcal{F}_{\alpha}(f)(\lambda)|^{q}|\lambda|^{2\alpha-1}{\rm d}\lambda\leqslant CW_\omega^{q}(h),\quad \frac{1}{p}+\frac{1}{q}=1.
			\end{equation}
			\item Let $W_\omega$ satisfy the Zygmund condition \eqref{cond3} and $f\in L_{2,\alpha}$ with $\alpha>1/2$. We also assume that $W_\omega(t)$ is bounded below on $[\delta_0,+\infty)$ by a positive number, and that $W_\omega^2(t)/t^{3}\in L^1\big([\delta_0,+\infty)\big)$. If there exists a constant $C>0$ such that, for all sufficiently small $h>0$, we have
			\begin{equation}\label{L2asymtotic-W}
				\int_{|\lambda|\geqslant \frac{1}{h}}|\mathcal{F}_{\alpha}(f)(\lambda)|^{2}|\lambda|^{2\alpha-1}{\rm d}\lambda\leqslant CW_\omega^{2}(h),
			\end{equation}
			then $f\in DLip(W_\omega,2)$.
		\end{enumerate}
	\end{thm}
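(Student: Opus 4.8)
The plan is to obtain both assertions as immediate consequences of Theorem~\ref{main1}, applied with $W_\om$ in the role of the modulus of continuity. The one thing that makes this legitimate is Lemma~\ref{oscarpro} (i.e. \cite[Proposition 3.1]{preprint}): since $\om$ is a modulus of continuity with $\om(t)/t\in L^1([0,\delta_0])$, the function $W_\om$ is itself a modulus of continuity on $[0,\delta_0]$, and in particular $W_\om(t)>0$ for $t\in(0,\delta_0]$, so that $DLip(W_\om,p)$ and the asymptotic condition \eqref{L2asymtotic-W} are meaningful. The standing restriction $\alpha>1/2$ here is at least as strong as the $\alpha>1/4$ needed in Theorem~\ref{main1} (and it is precisely what makes the translation identity \eqref{(2)} and the $L_{p,\alpha}$-boundedness of $T_h$ available), so nothing is lost in the reduction. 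Thus the whole proof is really a bookkeeping check that each hypothesis imposed on $W_\om$ in the statement corresponds to the hypothesis imposed on $\om$ in Theorem~\ref{main1}, together with the observation that the doubling and almost-monotonicity properties of a modulus of continuity, used repeatedly inside the proof of Theorem~\ref{main1}, are available for $W_\om$ because it is a modulus of continuity.

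For part (1), I would verify the hypotheses of Theorem~\ref{main1}(1) with $W_\om$ replacing $\om$: $W_\om$ is a modulus of continuity (Lemma~\ref{oscarpro}), it satisfies the Zygmund condition \eqref{cond4} by assumption, $f\in L_{p,\alpha}$ with $1<p\leqslant 2$ and $\alpha>1/2>1/4$, and $f\in DLip(W_\om,p)$ by assumption. Theorem~\ref{main1}(1) then yields a constant $C>0$ with
\[
\int_{|\lambda|\geqslant\frac{1}{h}}|\mathcal{F}_\alpha(f)(\lambda)|^{q}|\lambda|^{2\alpha-1}\,\mathrm{d}\lambda\leqslant C\,W_\om^{q}(h)
\]
for all sufficiently small $h>0$, which is exactly the claim. (Internally, the only place the structure of the modulus enters is the dyadic summation $\sum_{k\geqslant0}W_\om^{q}(h/2^{k})\leqslant C\,W_\om^{q}(h)$; this is valid because \eqref{cond4} for $W_\om$ forces $W_\om\in\mathfrak{Z}^{0}$, hence $m(W_\om)>0$ by Theorem~\ref{rafeiro}, and then $W_\om(t)/t^{\delta}$ is almost increasing for a suitable $\delta>0$.)

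For part (2), I would similarly check the hypotheses of Theorem~\ref{main1}(2) with $W_\om$ in place of $\om$: $W_\om$ is a modulus of continuity satisfying the Zygmund condition \eqref{cond3} by assumption, $f\in L_{2,\alpha}$ with $\alpha>1/2>1/4$, $W_\om$ is bounded below on $[\delta_0,+\infty)$ by a positive number and $W_\om^{2}(t)/t^{3}\in L^{1}([\delta_0,+\infty))$ by assumption, and the asymptotic input \eqref{L2asymtotic} required by Theorem~\ref{main1}(2), read with $\om$ replaced by $W_\om$, is precisely \eqref{L2asymtotic-W}. Applying Theorem~\ref{main1}(2) then gives $f\in DLip(W_\om,2)$, as desired. (As in Theorem~\ref{main1}, one tacitly extends $W_\om$ beyond $\delta_0$ consistently with the stated lower bound and tail condition, which does not affect the conclusion since $DLip(W_\om,2)$ only constrains small $h$.)

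I do not expect a genuine obstacle here: once Lemma~\ref{oscarpro} supplies that $W_\om$ is a bona fide modulus of continuity, Theorem~\ref{main2} is a formal corollary of Theorem~\ref{main1}. If anything required care, it would be confirming that every estimate in the proof of Theorem~\ref{main1} that invokes almost-monotonicity of $t\mapsto\om(t)/t$, the semi-additivity/doubling property, or membership of $\om$ in a Zygmund class, applies verbatim to $W_\om$; all of these hold because $W_\om$ is a modulus of continuity and the Zygmund conditions \eqref{cond4}, \eqref{cond3} are hypothesized directly on $W_\om$ rather than on $\om$.
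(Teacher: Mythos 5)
Your proposal is correct and matches the paper's own argument: the paper likewise reduces Theorem~\ref{main2} to the proof of Theorem~\ref{main1} by invoking Lemma~\ref{oscarpro} to guarantee that $W_\om$ is a modulus of continuity, then reruns the dyadic summation using $m(W_\om)>\delta>0$ from Theorem~\ref{rafeiro} and notes that the second part "follows the same steps" with $W_\om$ in place of $\om$. The only cosmetic difference is that the paper re-derives the key dyadic estimate explicitly rather than citing Theorem~\ref{main1} as a black box, which is immaterial.
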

	
	\proof 
	We mainly follow the same lines of the proof of the first part of Theorem \ref{main1}. One can convince that  
	\begin{align*}
		\int_{\frac{1}{h}\leqslant |\lambda|\leqslant \frac{2}{h}}|\mathcal{F}_{\alpha}(f)(\lambda)|^{q}|\lambda|^{2\alpha-1}{\rm d}\lambda \leqslant CW_\omega^{q}(h).
	\end{align*}
	
	Since $W_\omega$ is a modulus of continuity on $[0,\delta_0]$ satisfying \eqref{cond4} by Lemma \ref{oscarpro}, we can, by Theorem \ref{rafeiro}, fix some $\delta$ with $m\big(W_\omega\big)>\delta>0$ such that $\frac{W_\omega(t)}{t^{\delta}}$ is almost increasing, and deduce that
	\begin{align*}
		\int_{|\lambda|\geqslant\frac{1}{h}}&|\mathcal{F}_{\alpha}(f)(\lambda)|^{q}|\lambda|^{2\alpha-1}{\rm d}\lambda=\sum_{k=0}^{+\infty}\int_{\frac{2^{k}}{h}\leqslant |\lambda|\leqslant \frac{2^{k+1}}{h}}|\mathcal{F}_{\alpha}(f)(\lambda)|^{q}|\lambda|^{2\alpha-1}{\rm d}\lambda \\
		&\leqslant C_1 \sum_{k=0}^{+\infty}W_\omega^{q}\left(\frac{h}{2^{k}}\right)\leqslant C_2\frac{W_\omega^{q}(h)}{h^{\delta q}}\sum_{k=0}^{+\infty}\left(\frac{h}{2^{k}}\right)^{\delta q}\leqslant CW_\omega^{q}(h). 
	\end{align*}
	The rest of the proof follows the same steps of the second part of the proof of Theorem \ref{main1} by using the new modulus of continuity $W_\omega$ instead of $\omega$.  
	\endproof
	
	\begin{cor}
		Let $\omega$ be a modulus of continuity such that $\omega(t)/t\in L^1([0,\delta_0])$ and $f\in L_{2,\alpha}$ with $\alpha>1/4$. We assume that $W_\omega$ satisfies the conditions \eqref{cond4} and \eqref{cond3}.  Suppose that $W_\omega(t)$ is bounded below on $[\delta_0,+\infty)$ by a positive number, and that $W_\omega^2(t)/t^{3}\in L^1\big([\delta_0,+\infty)\big)$. Then the following statements are equivalent.  
		\begin{enumerate}
			\item There exists a constant $C>0$ such that for all sufficiently small $h>0$, 
			\[
			\|T_{h}f-f\|_{2,\alpha}\leqslant CW_\omega(h).
			\]
			\item There exists a constant $C>0$ such that for all sufficiently small $h>0$,
			\[
			\int_{|\lambda|\geqslant \frac{1}{h}}|\mathcal{F}_{\alpha}(f)(\lambda)|^{2}|\lambda|^{2\alpha-1}{\rm d}\lambda\leqslant CW_\omega^{2}(h).
			\]
		\end{enumerate}
	\end{cor}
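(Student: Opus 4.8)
The plan is to deduce this corollary directly from Theorem \ref{main2}, exactly as Corollary \ref{equivalence} was obtained from Theorem \ref{main1}. The first observation I would make is that, by Lemma \ref{oscarpro}(2), the weight $W_\omega$ is itself a modulus of continuity on $[0,\delta_0]$; consequently statement (1), namely $\|T_h f - f\|_{2,\alpha}\leqslant C W_\omega(h)$ for all sufficiently small $h>0$, is precisely the assertion that $f\in DLip(W_\omega,2)$. This identification is what allows both halves of Theorem \ref{main2} to be applied with the modulus of continuity taken to be $W_\omega$.

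For the implication (1) $\Rightarrow$ (2), I would invoke the first part of Theorem \ref{main2} with $p=q=2$: since $W_\omega$ satisfies the Zygmund condition \eqref{cond4}, $\omega(t)/t\in L^1([0,\delta_0])$, and $f\in L_{2,\alpha}$ (in the relevant range of $\alpha$), membership $f\in DLip(W_\omega,2)$ yields exactly the growth bound $\int_{|\lambda|\geqslant 1/h}|\mathcal{F}_{\alpha}(f)(\lambda)|^{2}|\lambda|^{2\alpha-1}\,\mathrm{d}\lambda\leqslant C W_\omega^{2}(h)$, which is statement (2).

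For the converse (2) $\Rightarrow$ (1), I would apply the second part of Theorem \ref{main2}: the hypotheses appearing there---that $W_\omega$ satisfies \eqref{cond3}, that $W_\omega$ is bounded below on $[\delta_0,+\infty)$ by a positive number, and that $W_\omega^{2}(t)/t^{3}\in L^{1}([\delta_0,+\infty))$---are exactly the standing assumptions of the corollary, so the asymptotic estimate in (2) forces $f\in DLip(W_\omega,2)$, i.e. statement (1).

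Since the corollary is a pure combination of the two implications of Theorem \ref{main2}, I do not expect any genuine obstacle; the only point requiring a little care is checking that every hypothesis of Theorem \ref{main2} is in force---in particular that it is $W_\omega$, rather than $\omega$ itself, to which the Zygmund conditions and the behaviour-at-infinity assumptions are applied, which is precisely what Lemma \ref{oscarpro} guarantees, together with the tacit restriction on the order $\alpha$ inherited from Theorem \ref{main2}.
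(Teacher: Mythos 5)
Your proposal is correct and is exactly the route the paper intends: the corollary is stated without proof as an immediate combination of the two parts of Theorem \ref{main2} applied with the modulus of continuity $W_\omega$ (legitimate by Lemma \ref{oscarpro}(2)) and $p=q=2$. The only caveat, which you rightly flag, is the range of $\alpha$: Theorem \ref{main2} is stated for $\alpha>1/2$ while the corollary claims $\alpha>1/4$, a discrepancy in the paper itself rather than in your argument.
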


\begin{thm}
Let $\omega$ be a modulus of continuity such that $\omega(t)/t\in L^1([0,\delta_0])$. Let $W_\omega$ satisfy the Zygmund condition \eqref{cond4}, and let $f\in L_{p,\alpha}$ with $1<p\leqslant 2$ and $\alpha>1/4$. If $f\in DLip(W_\omega,p)$ then $\mathcal{F}_{\alpha}(f)$ belongs to $L_{\nu,\alpha}$ $(\nu\leqslant q, 1/p+1/q=1)$ whenever 
\[
\frac{W_\omega^{\nu}(t)}{t^{(2\alpha)\left(1-\frac{\nu}{q}\right)+1}}\in L^{1}[0,1]\quad\text{and}\quad \lim_{h\to0}\frac{W_\omega^{\nu}(h)}{h^{(2\alpha)\left(1-\frac{\nu}{q}\right)}}<+\infty.
\]
\end{thm}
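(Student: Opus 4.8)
The plan is to recognise that this statement is exactly Theorem \ref{fourier} with the auxiliary weight $W_\om$ playing the role of $\om$. By Lemma \ref{oscarpro}(2), $W_\om$ is itself a modulus of continuity on $[0,\delta_0]$, and by hypothesis it satisfies the Zygmund condition \eqref{cond4}; moreover, the assumption $f\in DLip(W_\om,p)$ means precisely that $\|T_hf-f\|_{p,\alpha}\leqslant CW_\om(h)$ for all sufficiently small $h>0$, which is the only growth input used in the proof of Theorem \ref{fourier}. Consequently, one may simply invoke Theorem \ref{fourier} applied to $W_\om$. For completeness I would nonetheless reproduce the argument verbatim with $\om$ replaced by $W_\om$.

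Concretely, first I would combine the Hausdorff--Young inequality \eqref{(1)} for $\mathcal{F}_\alpha$ with the intertwining relation \eqref{(2)} to obtain
\[
\int_{\mathbb{R}}|1-B_{\alpha}(\lambda h)|^{q}|\mathcal{F}_{\alpha}(f)(\lambda)|^{q}|\lambda|^{2\alpha-1}{\rm d}\lambda\leqslant\|T_hf-f\|_{p,\alpha}^{q}\leqslant CW_\om^{q}(h)
\]
for small $h>0$. Using the near-origin lower bound \eqref{nearzero} for $|1-B_\alpha(\lambda h)|$ on the set $|\lambda h|<1$, this yields $\int_{|\lambda h|<1}|\lambda|^{q}|\mathcal{F}_{\alpha}(f)(\lambda)|^{q}|\lambda|^{2\alpha-1}{\rm d}\lambda\leqslant Ch^{-q}W_\om^{q}(h)$. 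Setting $y=1/h$ and applying H\"older's inequality with exponents $q/\nu$ and $(q/\nu)'$ (admissible since $\nu\leqslant q$) to split off the factor $|\lambda|^{2\alpha-1}$ over the ball $|\lambda|<y$, I would get
\[
\int_{|\lambda|<y}|\lambda|^{\nu}|\mathcal{F}_{\alpha}(f)(\lambda)|^{\nu}|\lambda|^{2\alpha-1}{\rm d}\lambda\leqslant Cy^{\nu}W_\om^{\nu}(1/y)\,y^{(2\alpha)\left(1-\frac{\nu}{q}\right)}
\]
for large $y$. Then I would write $\int_{\mathbb{R}}|\mathcal{F}_\alpha(f)|^{\nu}|\lambda|^{2\alpha-1}{\rm d}\lambda=I_1+I_2$ (integrals over $|\lambda|\leqslant1$ and $|\lambda|>1$); $I_1=O(1)$ since $\mathcal{F}_\alpha(f)\in L_{q,\alpha}$ by Hausdorff--Young and $|\lambda|^{2\alpha-1}$ is integrable near the origin, and for $I_2$ it suffices to treat the positive half-line. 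Putting $\phi(y)=\int_1^{y}s^{\nu}|\mathcal{F}_\alpha(f)(s)|^{\nu}s^{2\alpha-1}{\rm d}s$ and integrating by parts,
\[
\int_1^y s^{-\nu}\phi'(s)\,{\rm d}s=y^{-\nu}\phi(y)-\phi(1)+\nu\int_1^y s^{-\nu-1}\phi(s)\,{\rm d}s,
\]
I would insert the displayed bound on $\phi$, change variables $u=1/s$, and with $h=1/y$ reduce the boundary term to $C\,W_\om^{\nu}(h)/h^{(2\alpha)(1-\nu/q)}$ and the remaining integral to $\nu\int_h^1 W_\om^{\nu}(u)\,u^{-(2\alpha)(1-\nu/q)-1}{\rm d}u$; both stay bounded as $h\to0$ exactly under the two hypotheses on $W_\om$, giving $\mathcal{F}_\alpha(f)\in L_{\nu,\alpha}$.

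There is no genuine obstacle here: the argument is a line-by-line transcription of the proof of Theorem \ref{fourier}. The only points deserving a moment's care are (i) confirming that $W_\om$ truly qualifies as a modulus of continuity, so that continuity, the monotonicity of $W_\om(t)$ and $W_\om(t)/t$, and the doubling/semi-additivity properties used for $\om$ remain available --- this is precisely Lemma \ref{oscarpro}(2); and (ii) checking the admissibility of the H\"older exponents, i.e. $\nu\leqslant q$, which is assumed. If one prefers a one-line proof, it suffices to say: \emph{$W_\om$ is a modulus of continuity satisfying \eqref{cond4} by Lemma \ref{oscarpro}, hence the claim is Theorem \ref{fourier} applied to $W_\om$.}
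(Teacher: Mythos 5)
Your proposal is correct and is essentially the paper's own route: the paper gives no separate proof for this statement, treating it exactly as you do --- as Theorem \ref{fourier} applied to the modulus of continuity $W_\om$, which qualifies by Lemma \ref{oscarpro}, with the argument otherwise a line-by-line transcription of the proof of Theorem \ref{fourier}. Your added justification of $I_1=O(1)$ via Hausdorff--Young and local integrability of $|\lambda|^{2\alpha-1}$ is a harmless elaboration of a step the paper leaves as ``easily seen.''
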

 
	\begin{rem}
		It is worth noting that Theorem \ref{main2} under the additional condition \eqref{cond4} for the modulus of continuity $\omega$ coincides with Theorem \ref{main1}, since $W_{\om}(h)\sim\omega(h)$  by \eqref{cond4} and Lemma \ref{oscarpro}. We would also like to make it clear that the classes of functions satisfying Theorem \ref{main1} are strictly contained in Theorem \ref{main2}. For example, take the modulus of continuity $\omega(t)=\ln^{-\beta}\frac{e}{t}$, $\beta>1$, which does not satisfy \eqref{cond4}. So, in general, we have
		\[DLip(\omega,p)\subset DLip(W_\omega,p).\]
		If the modulus of continuity $\omega$ satisfies \eqref{cond4}, then both classes coincide:
		\[DLip(\omega,p)=DLip(W_\omega,p).\]
	\end{rem}	

\begin{rem}
In a very particular case, we can recover some results from \cite{extension}, where the authors obtained an analog of Younis's theorem \cite[Theorem 5.2]{younis} for the Dunkl transform over a space of functions satisfying a Lipschitz type condition in $L_{p,\alpha}$ $(\alpha>1/4,1<p\leqslant 2)$ involving the Dunkl-translation operator. 
\end{rem}

\section{Illustrative examples} \label{exapl}
The aim of this section is to illustrate the power of the results discussed in the previous section in a very general setting. Sometimes, it is not clear whether there is an easy way to check if a special modulus of continuity $\omega$ satisfies either condition \eqref{cond4} or \eqref{cond3}, or both. Therefore, we present some important classes of examples where our general results are applicable. It is known that a modulus of continuity $\omega$ satisfies $0\leqslant m(\omega)\leqslant M(\omega)\leqslant +\infty$ (see \cite[P. 32]{rafeirobook}). Notice that, for a modulus of continuity to satisfy the condition \eqref{cond4} or \eqref{cond3}, it is equivalent to prove $0<m(\omega)$ and $M(\omega)<1$ respectively, see Theorem \ref{rafeiro}.       

\begin{ex}
For the classical case $\omega(t)=t^{\gamma}$ with $0<\gamma<1$, it can be easily (directly) verified that $\omega$ satisfies \eqref{cond4} or \eqref{cond3}. Therefore, all the statements in Theorem \ref{main1}, Corollary \ref{equivalence} and Theorem \ref{fourier} hold.  Therefore,  we get some similar results (in nature) given by Titchmarsh \cite[Theorems 84 and 85]{ti} by means of the $\alpha$-Hankel transform. Moreover, it follows from Theorem \ref{fourier}, that for any $f\in DLip(\omega,p)$ with $1<p\leqslant 2$ and $\alpha>1/4,$ we have  $\mathcal{F}_{\alpha}(f) \in L_{\nu,\alpha}$  whenever $\frac{(2\alpha)p}{p\gamma+(2\alpha)(p-1)} \leqslant \nu \leqslant q$, where $q$ is the Lebesgue conjugate of $p,$ that is, $ 1/p+1/q=1.$  This is an improvement of the Hausdorff-Young inequality for the class of functions belonging to  $DLip(\omega,p).$
\end{ex}

\begin{ex}
Let us now present a more interesting case. Consider the weight $\omega(t)=t^{\gamma}\left(\ln\frac{1}{t}\right)^{\theta}$ $(0<\gamma<1,\theta\in\mathbb{R})$. Here we first apply the explicit formulas of $m(\omega)$ and $M(\omega)$ given in Section \ref{preli} for this special case case. In fact, we have 
\[
m(\omega)=\lim_{t\to0}\frac{\ln\left(\displaystyle\limsup_{\varepsilon\to0}\frac{(\varepsilon t)^{\gamma}\left(\ln\frac{1}{\varepsilon t}\right)^{\theta}}{\varepsilon^{\gamma}\left(\ln\frac{1}{\varepsilon}\right)^{\theta}}\right)}{\ln t}=\lim_{t\to0}\frac{\ln t^{\gamma}}{\ln t}=\gamma.
\]
Similarly, we get
\[
M(\omega)=\lim_{t\to+\infty}\frac{\ln\left(\displaystyle\limsup_{\varepsilon\to0}\frac{(\varepsilon t)^{\gamma}\left(\ln\frac{1}{\varepsilon t}\right)^{\theta}}{\varepsilon^{\gamma}\left(\ln\frac{1}{\varepsilon}\right)^{\theta}}\right)}{\ln t}=\lim_{t\to+\infty}\frac{\ln t^{\gamma}}{\ln t}=\gamma.
\]
Since $0<\gamma<1$, we see that $\omega$ satisfies the conditions \eqref{cond4} and \eqref{cond3}. So, for this class of modulus of continuity, Theorem \ref{main1} and Corollary \ref{equivalence} hold. 
\end{ex}
 
\section{Acknowledgements}
	The authors were supported  by the FWO Odysseus 1 grant G.0H94.18N: Analysis and Partial
	Differential Equations, the Methusalem programme of the Ghent University Special Research Fund (BOF) (Grant number 01M01021) and by FWO Senior Research Grant G011522N. MR is also supported by EPSRC grant EP/R003025/2.
	
	%No new data was collected or generated during the course of this research.

\end{document}